\numberwithin{equation}{section}
\newtheorem{thm}{Theorem}[section]
\newcommand{\bt}{\begin{thm}}
\newcommand{\et}{\end{thm}}
\newtheorem{cor}[thm]{Corollary}
\newcommand{\bc}{\begin{cor}}
\newcommand{\ec}{\end{cor}}
\newtheorem{lem}[thm]{Lemma}
\newcommand{\bl}{\begin{lem}}
\newcommand{\el}{\end{lem}}
\newtheorem{prop}[thm]{Proposition}
\newcommand{\bp}{\begin{prop}}
\newcommand{\ep}{\end{prop}}
\newtheorem{defn}[thm]{Definition}
\newcommand{\bd}{\begin{defn}}
\newcommand{\ed}{\end{defn}}
\newtheorem{rmrk}[thm]{Remark}
\newcommand{\br}{\begin{rmrk}}
\newcommand{\er}{\end{rmrk}}
\newtheorem{quest}[thm]{Question}
\newcommand{\bq}{\begin{quest}}
\newcommand{\eq}{\end{quest}}
\newtheorem{ex}[thm]{Example}
\newcommand{\bex}{\begin{ex}}
\newcommand{\eex}{\end{ex}}
\newcommand{\C}{\mathbb{C}}
\newcommand{\N}{\mathbb{N}}
\newcommand{\Cat}{\mathrm{CAT}}
\DeclareMathOperator{\arsinh}{arsinh}
\newdimen\vintkern\vintkern12pt
\def\vint{-\kern-\vintkern\int}
\newcommand{\hm}{{\mathcal H}}
\newcommand{\trace}{\operatorname{tr}}
\newcommand{\Area}{\operatorname{Area}}
\newcommand{\md}{\operatorname{md}}
\newcommand{\sys}{\operatorname{sys}}
\newcommand{\jac}{{\mathbf J}}
\newcommand{\ap}{\operatorname{ap}}
\newcommand{\apmd}{\ap\md}
\newcommand{\R}{\mathbb{R}}
\begin{document}



\pagebreak



\title{The Plateau-Douglas problem for singular configurations and in general metric spaces}


\author{Paul Creutz}

	\address{Department of Mathematics and Computer Science, University of Cologne, Weyertal 86-90, 50931 K\"oln, Germany.}
\email{pcreutz@math.uni-koeln.de}

\author{Martin Fitzi}

\address
  {Department of Mathematics\\ University of Fribourg\\ Chemin du Mus\'ee 23\\ 1700 Fribourg, Switzerland}
\email{martin.fitzi@unifr.ch}

\date{\today}

\thanks{The first author was partially supported by the DFG grant SPP 2026.
The second author was partially supported by the Swiss National Science Foundation Grants 165848 and 182423. This work was also partially supported by the grant 346300 for IMPAN from the Simons Foundation and the matching 2015-2019 Polish MNiSW fund}

\begin{abstract}
Assume you are given a finite configuration $\Gamma$ of disjoint rectifiable Jordan curves in~$\mathbb{R}^n$. The Plateau-Douglas problem asks whether there exists a minimizer of area among all compact surfaces of genus at most $p$ which span~$\Gamma$. While the solution to this problem is well-known, the classical approaches break down if one allows for singular configurations~$\Gamma$ where the curves are potentially non-disjoint or self-intersecting. Our main result solves the Plateau-Douglas problem for such potentially singular configurations. Moreover, our proof works not only in $\mathbb{R}^n$ but in general proper metric spaces. Thus we are also able to extend previously known existence results of J\"{u}rgen Jost as well as of the second author together with Stefan Wenger for regular configurations. In particular, existence is new for disjoint configurations of Jordan curves in general complete Riemannian manifolds. A minimal surface of \emph{fixed} genus $p$ bounding a given configuration $\Gamma$ need not always exist, even in the most regular settings. Concerning this problem, we also generalize the approach for singular configurations via minimal sequences satisfying conditions of cohesion and adhesion to the setting of metric spaces.
\end{abstract}

\maketitle

\renewcommand{\theequation}{\arabic{section}.\arabic{equation}}
\pagenumbering{arabic}

\section{Introduction and statement of main results}\label{sec:Intro}

\subsection{Introduction}\label{sec:intro-basics}
The classical Plateau problem asked whether any given rectifiable Jordan curve~$\Gamma$ in~$\R^n$ bounds a Sobolev disc of least area. The positive answer was obtained independently by Douglas and Rad\'{o} in the early 1930's,~\cite{Rad30,Dou31}. Over the years their result was generalized from $\mathbb{R}^n$ to so-called homogeneously regular Riemannian manifolds,  metric spaces satisfying curvature bounds in the sense of Alexandrov and particular classes of homogeneously regular Finsler manifolds,~\cite{Mor48,Nik79,MZ10,OvdM14,PvdM17}. The solution of Plateau's problem in proper metric spaces given by Lytchak-Wenger in~\cite{LW15-Plateau} covers all these settings. However, even in $\mathbb{R}^n$, the arguments break down if~$\Gamma$ is allowed to self-intersect. Still the generality of~\cite{LW15-Plateau} and a simple extension trick allowed the first author to solve the Plateau problem for possibly self-intersecting curves in proper metric spaces which satisfy a local quadratic isoperimetric inequality,~\cite{Creutz-singular}. In $\mathbb{R}^n$ this improved a previous existence result due to Hass,~\cite{Has91}.\par
The Plateau-Douglas problem is a variation of the Plateau problem, where one allows for various boundary components and surfaces of nontrivial topology. One way to state the solution obtained by Douglas in~\cite{Dou39} is the following: assume you are given a finite configuration of disjoint rectifiable Jordan curves~$\Gamma$ in~$\mathbb{R}^n$ and a natural number $p\geq 0$. Then there exists an area minimizer among all compact surfaces which have genus at most~$p$ and span~$\Gamma$. Douglas' result has since been extended by Jost to homogeneously regular Riemannian manifolds and recently even further by the second author together with Stefan Wenger to proper metric spaces admitting a local quadratic isoperimetric inequality,~\cite{Jos85,FW-Plateau-Douglas}. Again, the machinery fails if one allows for singular, possibly non-disjoint or self-intersecting configurations. Our main result, Theorem~\ref{thm:main} below, solves the Plateau-Douglas problem for such possibly singular configurations and in general proper metric spaces. The solution for singular configurations is new even in~$\mathbb{R}^n$. Theorem~\ref{thm:main} also generalizes the main results of~\cite{FW-Plateau-Douglas} and~\cite{Creutz-singular} as we are able to drop the assumption that~$X$ admits a local quadratic isoperimetric inequality. In particular, existence is new for regular configurations in complete Riemannian manifolds which might not be homogeneously regular. It is not surprising that existence in this case is harder to obtain, since already for such a setting discontinuous solutions can only be excluded under additional geometric assumptions, cf.~\cite{Mor48}.\par 
Note that the somewhat more modern approach to Plateau's problem via currents as in~\cite{FF60,AK00} does not allow for bounding the topology of solutions, and for singular configurations currents would consider the boundary curves rather as unparametrized objects and could not keep track of the order in which they are traversed, in contrast to our approach. Moreover, beyond the Riemannian setting, there is no appropriate regularity theory available.
\subsection{Main result}
Simple examples show that, without additional assumptions, one cannot hope for reasonably regular area minimizers of prescribed topological type to bound a given contour~$\Gamma$. For example, a Jordan curve in $\R^n$ which is convex and contained in a plane does not span a minimal surface of genus~$p>0$, see~\cite{Mee81}. There are two ways to handle this issue. As in~\cite{Dou39,Jos85} we will state our result in terms of the so-called Douglas condition. It is however not hard to see that that this formulation, which we discuss below, is equivalent to the one via (possibly disconnected) surfaces of bounded topology promoted in Section~\ref{sec:intro-basics}, cf.~\cite{FW-Plateau-Douglas}.

For the convenience of a reader who might not be familiar with the theory of metric space valued Sobolev maps, we first state our main result in the smooth context before moving to the more general setting. To this end, let $X$ be a smooth complete Riemannian manifold and $M$ be a smooth, orientable, compact surface (which might be disconnected). Assume furthermore that all connected components of $M$ have nonempty boundary. For a map $u$ in the Sobolev space $W^{1,2}(M,X)$ we denote by $\Area(u)$ the parametrized Riemannian area of~$u$. \par
Assume now that $M$ has $k\geq 1$ boundary components $\partial M_i$ and $\Gamma$ is a collection of $k$ rectifiable closed curves $\Gamma_j$ in~$X$. By a \textit{rectifiable closed curve} we mean an equivalence class of parametrized rectifiable curves $\gamma\colon S^1\to X$. We identify two such parametrized curves if they are reparametrizations of each other, meaning more precisely that their constant speed parametrizations agree up to a homeomorphism of~$S^1$. We say that a map $u\in W^{1,2}(M,X)$ \textit{spans} $\Gamma$ if for each curve $\Gamma_j$ there exists a boundary component $\partial M_i$ such that the trace $u|_{\partial M_i}$ is a parametrization of~$\Gamma_j$. Let $\Lambda(M,\Gamma,X)$ be the family of Sobolev maps $u\in W^{1,2}(M,X)$ which span~$\Gamma$. We define $$a( M,\Gamma, X):= \inf\{\Area(u): u\in\Lambda( M, \Gamma, X)\}$$ 
and $a_p(\Gamma,X):=a(M,\Gamma,X)$ if $M$ is the (up to a diffeomorphism) unique connected surface of genus $p$ with $k$ boundary components. We say that the \emph{Douglas condition} holds for $p$, $\Gamma$ and $X$ if $a_p(\Gamma,X)$ is finite and
\begin{equation}\label{eq:cond-Douglas}
a_p(\Gamma,X)<a(M,\Gamma,X)
\end{equation}
for every $M$ as in the previous paragraph and of one of the following types. Either $M$ is connected and of genus strictly smaller than $p$, or $M$ is disconnected and of total genus at most $p$. Note that in the case where $\Gamma$ is a single curve and $p=0$, which corresponds to the classical Plateau problem, the Douglas condition is equivalent to the assumption that there is at least one Sobolev disc spanning~$\Gamma$.
\begin{thm}
\label{thm:main0}
Let $X$ be a smooth complete Riemannian manifold and $\Gamma$ a configuration of $k\geq 1$ rectifiable closed curves. Let $M$ be a compact, connected and orientable surface with $k$ boundary components and of genus $p\geq 0$. If the Douglas condition holds for $p, \Gamma$ and $X$, then there exists $u\in \Lambda(M,\Gamma,X)$ as well as a Riemannian metric $g$ on $M$ such that
$$
    \Area(u)=a_p(\Gamma,X)
$$
and $u$ is weakly conformal with respect to~$g$ on $M \setminus u^{-1}(\Gamma)$. Furthermore, if...
\begin{enumerate}[label=(\roman*)]
    \item \label{main0-i}
    ... $X$ is homogeneously regular, then $u$ may be chosen H\"{o}lder continuous on $M$ and smooth on $M \setminus u^{-1}(\Gamma)$.
    \item \label{main0-ii}
    ... $X$ is homogeneously regular and $\Gamma$ is $C^2$, then $u$ may be chosen locally Lipschitz on $M\setminus \partial M$.
        \item \label{main0-iii}
    ... $\Gamma$ is a union of disjoint Jordan curves, then $u$ and $g$ may be chosen such that $u$ is weakly conformal with respect to $g$ on $M$.
\end{enumerate}
\end{thm}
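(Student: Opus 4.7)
The plan is to deduce Theorem~\ref{thm:main0} directly from the general metric-space result Theorem~\ref{thm:main} and then to upgrade the abstract weakly-conformal minimizer to the regularity asserted in (i)--(iii) by invoking classical elliptic theory. Since Theorem~\ref{thm:main} applies to any proper metric space (without a local quadratic isoperimetric inequality), and a smooth complete Riemannian manifold is proper by the Hopf-Rinow theorem, applying it to $(M,\Gamma,X)$ under the Douglas condition produces a map $u\in\Lambda(M,\Gamma,X)$ and a Riemannian metric $g$ on $M$ with $\Area(u)=a_p(\Gamma,X)$ and $u$ weakly conformal on $M\setminus u^{-1}(\Gamma)$. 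All the existence work is therefore done by Theorem~\ref{thm:main}; what remains is the regularity.

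For case~(i), I would observe that on $M\setminus u^{-1}(\Gamma)$ the minimizer $u$, after reparametrization within its conformal class to be simultaneously energy-minimizing, satisfies the classical harmonic map equation into the smooth Riemannian target $X$. In the homogeneously regular setting this identification is legitimate because the Reshetnyak/Korevaar-Schoen metric energy coincides with the classical Dirichlet energy for Riemannian targets, and standard interior elliptic regularity for harmonic maps then yields smoothness on $M\setminus u^{-1}(\Gamma)$. H\"older continuity on all of $M$ follows from the Morrey/Jost regularity theory for area-minimizing surfaces with prescribed rectifiable trace. For (ii), the additional $C^2$-regularity of $\Gamma$ permits the boundary regularity theory for harmonic maps against $C^2$ traces (Hildebrandt-Kaul-Widman, Jost) to be invoked, upgrading the interior smoothness to local Lipschitz continuity on all of $M\setminus\partial M$, including across the set $u^{-1}(\Gamma)\setminus\partial M$ where the map may return to touch $\Gamma$ from the interior. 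For (iii), if $\Gamma$ is a disjoint union of Jordan curves then the trace $u|_{\partial M_i}$ is a weakly monotone parametrization of the corresponding $\Gamma_j$ (a standard consequence of area-minimality for disjoint Jordan boundaries), forcing $u^{-1}(\Gamma)=\partial M$, so that weak conformality on $M\setminus u^{-1}(\Gamma)$ is already weak conformality of the Sobolev map $u$ on all of $M$.

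The main obstacle is reconciling the metric-space and smooth-manifold pictures in case~(i). The minimizer produced by Theorem~\ref{thm:main} a priori only satisfies the abstract weak-conformality identity, whereas elliptic regularity for harmonic maps lives in the smooth world. One has to verify that, for a Riemannian target, the metric-valued energy minimizer on $M\setminus u^{-1}(\Gamma)$ is indeed a classical harmonic map and that area-minimality is preserved under harmonic replacement; both follow from Reshetnyak's theorem on the coincidence of metric and classical Sobolev energies for Riemannian targets. The other main subtlety, which is handled inside Theorem~\ref{thm:main} itself, is using the strict Douglas inequality~\eqref{eq:cond-Douglas} to rule out degeneration of the conformal structure $g$ along a minimizing sequence.
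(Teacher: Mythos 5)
Your overall plan is the same as the paper's: reduce Theorem~\ref{thm:main0} to Theorem~\ref{thm:main} by identifying a smooth complete Riemannian manifold with the proper (by Hopf--Rinow) metric space it defines, noting that approximate metric derivatives are Euclidean seminorms so that ``infinitesimally isotropic'' becomes ``weakly conformal'', and then deriving the extra regularity of (i)--(iii) from the corresponding clauses of Theorem~\ref{thm:main} plus classical elliptic theory. That much is correct and matches the ``translation'' step in Section~\ref{sec:translation}. For (i), the paper obtains H\"older continuity from Theorem~\ref{thm:main}.\ref{main-i} (using that homogeneously regular manifolds admit a local quadratic isoperimetric inequality) and then upgrades to smoothness on $M\setminus u^{-1}(\Gamma)$ by observing that a weakly conformal area minimizer already minimizes Dirichlet energy, hence is weakly harmonic, and continuous weakly harmonic maps are smooth. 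Your phrasing ``after reparametrization within its conformal class to be simultaneously energy-minimizing'' is unnecessary: no reparametrization is needed for this step.

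There are, however, genuine gaps in (ii) and (iii). For~(ii), you propose to invoke classical boundary regularity for harmonic maps against $C^2$ traces (Hildebrandt--Kaul--Widman, Jost) and then assert that this propagates Lipschitz regularity ``across the set $u^{-1}(\Gamma)\setminus\partial M$ where the map may return to touch $\Gamma$ from the interior.'' This is exactly the step that fails: classical boundary regularity controls the map near $\partial M$ given regular boundary data, but it says nothing about interior points of $M$ whose image happens to lie on~$\Gamma$. At such points the metric $g$ is not distorted, the map is not constrained, and there is no boundary in sight --- yet the weak conformality and weak harmonicity from Theorem~\ref{thm:main} are only asserted \emph{off} the set $u^{-1}(\Gamma)$, so you cannot even start the bootstrapping argument there. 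Dealing with the interior touching set is precisely what the paper's construction of the funnel extension $\hat X_\Gamma$ is for: one replaces $\Gamma$ by a boundary curve $\hat\Gamma$ in a locally $\Cat$ extension such that the new minimizer $v$ is an interior harmonic map into a space of curvature bounded above, applies the Serbinowski/BFHMSZ Lipschitz theory there, and then projects via the $1$-Lipschitz retraction. The hypotheses ``$X$ homogeneously regular, $\Gamma$ of class $C^2$'' feed into this via: homogeneous regularity $\Rightarrow$ quadratic isoperimetric inequality, smoothness of $X$ $\Rightarrow$ local upper curvature bound, and $C^2$ $\Rightarrow$ finite total curvature. Your route does not produce Lipschitz control near the interior touching set, so it does not prove (ii).

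For~(iii), you argue that disjointness of the Jordan curves forces $u^{-1}(\Gamma)=\partial M$, ``a standard consequence of area-minimality,'' and hence that weak conformality on $M\setminus u^{-1}(\Gamma)$ is already global. Weak monotonicity of the trace is indeed standard, but it controls $u|_{\partial M}$, not the interior preimage of $\Gamma$; the implication $u^{-1}(\Gamma)=\partial M$ is a separate, stronger statement that is not automatic in a general Riemannian target (it typically uses a convex-hull or maximum-principle argument, which is not available here). The paper does not need or assert this: Theorem~\ref{thm:main}.\ref{main-iii} gives infinitesimal isotropy on all of $M$ directly, and it is proved by a different mechanism --- first show $\Lambda_{\min}$ is nonempty via Proposition~\ref{prop:mainstep}, then minimize energy over $\Lambda_{\min}$ and apply \cite[Corollary~1.3]{FW-epsConf} to the energy-optimal area minimizer. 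You should simply cite Theorem~\ref{thm:main}.\ref{main-iii} and translate infinitesimal isotropy into weak conformality, rather than rely on an unproved geometric claim about $u^{-1}(\Gamma)$.
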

Here, by weakly conformal we mean that almost everywhere the weak differential of~$u$ either vanishes or is angle preserving. 
 Already the most simple example of a figure eight curve in~$\mathbb{R}^2$ shows that self-intersecting curves need not always bound globally weakly conformal area minimizing discs, cf.~\cite{Has91}. So the assumption of \ref{main0-iii} seems quite sharp. Note that the existence of globally H\"{o}lder continuous area minimizers guaranteed by \ref{main0-i} is new already for topologically regular configurations in~$\mathbb{R}^n$ which potentially are of low analytic regularity. Compare the respective discussion for the Plateau problem in~\cite{Creutz-singular}. Without geometric assumptions one cannot hope for the conclusion of \ref{main0-i} to be true. See \cite[p.~809]{Mor48} for a complete Riemannian manifold $X$ and a Jordan curve $\Gamma\subset X$ which only bounds discontinuous area minimizers. Parts \ref{main0-i} and \ref{main0-ii}, respectively \ref{main0-ii} and \ref{main0-iii}, are compatible in the sense that when both respective assumptions are satisfied then one can achieve the conclusion simultaneously for a single map~$u$, compare Remark~\ref{rem:hoelder+lip}. However, if both the assumptions in \ref{main0-i} and \ref{main0-iii} hold, we can only cook up a single area minimizer which is simultaneously weakly conformal and globally H\"{o}lder continuous in the previously known case where all the curves of~$\Gamma$ satisfy a chord-arc condition.

We sketch the main ideas entering in the proof of Theorem~\ref{thm:main}. For \ref{main0-i}, the procedure is conceptually similar to the respective disc type result obtained in~\cite{Creutz-singular}. Namely, we attach a cylinder to each of the curves in~$\Gamma$. This way we obtain a metric space $X_\Gamma$, which admits a local quadratic isoperimetric inequality and contains $X$ isometrically, as well as a regular configuration $\tilde{\Gamma}\subset X_\Gamma$. Now we apply \cite{FW-Plateau-Douglas} to solve the Plateau-Douglas problem for the new pair $(X_\Gamma,\tilde{\Gamma})$ and project the obtained solution down to~$X$. This gives the desired solution for~$(X,\Gamma)$. For~\ref{main0-ii}, the proof follows essentially the same lines. However, the construction is now performed in a way that is more sensitive to the concrete geometric situation. The construction scheme, which is a generalization of the funnel extensions introduced by Stadler in~\cite{Sta:ar}, allows us to obtain an extension space $\hat{X}_\Gamma$ which admits a local quadratic isoperimetric inequality \textit{and} is locally of curvature bounded above in the sense of Alexandrov. This latter feature allows to apply the regularity theory for harmonic maps into spaces of curvature bounded above as developed e.g. in~\cite{KS93,Ser95,BFHMSZ18}, and hence derive the desired Lipschitz regularity. For the special case \ref{main0-iii}, we use $\varepsilon$-thickenings as introduced in~\cite{Wen08-sharp} to approximate $X$ by metric spaces $(X_n)_{n\in \mathbb{N}}$ which admit local quadratic isoperimetric inequalities and contain~$X$ isometrically. Then we apply again~\cite{FW-Plateau-Douglas} to obtain solutions $(u_n)_{n\in \mathbb{N}}$ for the pairs $(X_n,\Gamma)$ respectively. A variant of the Rellich-Kondrachov compactness theorem allows us to pass to a limit surface in~$X$ which is our desired solution. The proof of the remaining general case involves a mix of the arguments discussed for \ref{main0-i} and \ref{main0-iii}.

At this point, we would like to emphasize the following remarkable feature of Theorem~\ref{thm:main0} and its proof: despite major additional complications that arise, the results and methods developed in~\cite{FW-Plateau-Douglas} for the Plateau-Douglas problem in metric spaces are in principle adaptations of respective ones developed for the classical Plateau-Douglas problem in smooth ambient spaces. However, the flexibility of the metric setting therein allows us to draw new conclusions in the smooth setting that seem out of reach within the classical methods.\par 
A theory of metric space valued Sobolev maps has been developed over the last 30 years. With this language at hand, one can generalize all the introduced terminology to the setting where $X$ is a complete metric space, see Sections~\ref{sec:Prelim} and~\ref{sec:regcase} below. Recall that a metric space $X$ is called \emph{proper} if all closed and bounded subsets of~$X$ are compact. In fact, Theorem~\ref{thm:main0} is a special case of the following very general result.
\bt
\label{thm:main}
Let $X$ be a proper metric space and $\Gamma$ a configuration of $k\geq 1$ rectifiable closed curves. Let $M$ be a compact, connected and orientable surface with $k$ boundary components and of genus $p\geq 0$. If the Douglas condition holds for $p$, $\Gamma$ and $X$, then there exists $u\in \Lambda(M,\Gamma,X)$ as well as a Riemannian metric $g$ on $M$ such that $$\Area(u)=a_p(\Gamma,X)$$ 
and $u$ is infinitesimally isotropic with respect to $g$ on $M \setminus u^{-1}(\Gamma)$. Furthermore, if...
\begin{enumerate}[label=(\roman*)]
    \item \label{main-i}
    ... $X$ admits a local quadratic isoperimetric inequality, then $u$ may be chosen H\"{o}lder continuous on $M$ and to satisfy Lusin's property (N).
    \item \label{main-ii}
    ... $X$ is geodesic, admits a local quadratic isoperimetric inequality and is locally of curvature bounded above, and $\Gamma$ is of finite total curvature, then $u$ may be chosen locally Lipschitz on $M\setminus \partial M$.
    \item \label{main-iii}
    ... $\Gamma$ is a union of disjoint Jordan curves, then $u$ and $g$ may be chosen such that $u$ is infinitesimally isotropic with respect to $g$ on $M$.
    \end{enumerate}
\et
The respective assumptions and conclusions in Theorem~\ref{thm:main} are natural metric generalizations of the respective smooth ones in Theorem~\ref{thm:main0}. For example homogeneously regular Riemannian manifolds admit a local quadratic isoperimetric inequality. In fact, the huge class of metric spaces admitting a local quadratic isoperimetric inequality includes also homogeneously regular Finsler manifolds, $\textnormal{CAT}(\kappa)$ spaces, compact Alexandrov spaces as well as more exotic examples such as higher dimensional Heisenberg groups, cf.~\cite{LW15-Plateau}. In particular, the assumption on~$X$ in Theorem~\ref{thm:main}.\ref{main-ii} is satisfied if $X$ is a $\textnormal{CAT}(\kappa)$ space.

We would also like to remark that, despite the fact that we exclusively restrict our discussion to the parametrized Hausdorff area (see Definition~\ref{def:param-Hsdff-area}), an appropriate variant of Theorem~\ref{thm:main} holds for any area functional which induces quasi-convex $2$-volume densities in the sense of \cite{LW17-en-area,APT04} such as the Holmes-Thompson area functional. In order to obtain the respective results, only minor modifications in the proof of the theorem are needed. 
\subsection{Conditions of cohesion and adhesion}
As discussed above, in general one cannot hope for a given configuration~$\Gamma$ of disjoint Jordan curves to bound a minimal surface of prescribed topological type if the Douglas condition for $p$, $\Gamma$ and $X$ fails. However, there are still situations where the Douglas condition fails but one can show the existence of such a desired surface. Namely, if the area infimum may be approximated by a sequence of surfaces which satisfies a geometric nondegeneracy condition, called \emph{condition of cohesion}. In increasingly more general settings this has been shown to hold true in~\cite{Cou37,Shi39,TT88,FW-Plateau-Douglas}. Additional difficulties arise if one allows for singular configurations~$\Gamma$. Imposing an additional so-called \emph{condition of adhesion}, Iseri was able to show a statement of similar spirit for singular configurations in~$\mathbb{R}^n$,~\cite{Ise96}. In Section~\ref{sec:adh} we generalize the definition of adhesion and Iseri's result to the setting of metric spaces. For regular configurations in sufficiently nice ambient spaces, the Douglas condition implies the condition of cohesion for any sequence of surfaces approaching the energy infimum. Note however that nothing similar is true for singular configurations and the condition of adhesion. Hence these results can only be applied to obtain existence for very particular configurations, cf.~\cite{Ise96}.
\subsection{Organization}\label{sec:org} 
 After recalling some basic notions in section~\ref{sec:Prelim}, we discuss the proof of Theorem~\ref{thm:main}.\ref{main-i} in Section~\ref{sec:regcase}, where we first recall some terminology and the main result of \cite{FW-Plateau-Douglas} in Subsection~\ref{sec:recall} before giving the actual proof of (i) in Subsection~\ref{sec:proofreg}. Moving forward, we discuss a generalization of the Cartan-Hadamard theorem due to Bowditch and a gluing result due to Stadler in Subsection~\ref{sec:glob}, and the proof of Theorem~\ref{thm:main}.\ref{main-ii} is performed in Subsection~\ref{sec:fincurv}. Section~\ref{sec:gencase} is then dedicated to the proofs of Theorems~\ref{thm:main} and~\ref{thm:main0} in the general case. In Subsection~\ref{sec:appseq}, we first discuss how general proper metric spaces~$X$ can be approximated by more regular spaces admitting local quadratic isoperimetric inequalities and when one can pass from a sequence of fillings within the approximating spaces to a limit filling in~$X$. Then in Subsection~\ref{sec:reductionsfillings}, we recall two devices from~\cite{FW-Plateau-Douglas} that allow, in spaces admitting a local quadratic isoperimetric inequality, to lower the topological type of an area minimizing sequence whenever this sequence degenerates. These devices are combined in Section~\ref{sec:reductionsapp} with the approximating spaces discussed before. The proof of Theorem~\ref{thm:main} is then completed in Section~\ref{sec:proofgen}. In Section~\ref{sec:translation} we briefly discuss how Theorem~\ref{thm:main0} follows from Theorem~\ref{thm:main}. Finally in Section~\ref{sec:adh}, we discuss the method using minimizing sequences satisfying conditions of cohesion and adhesion.
\subsection*{Acknowledgements}
We wish to thank our respective PhD advisors Alexander Lytchak and Stefan Wenger for their great support. Furthermore, advancements in this project were made during a research visit to the Institute of Mathematics of the Polish Academy of Sciences in 2019 and we would like to thank the IMPAN for its hospitality.
\section{Preliminaries}\label{sec:Prelim}
\subsection{Basic notation}\label{sec:prelim-notations}
We write $|v|$ for the Euclidean norm of a vector $v\in\R^2$, $$D:=\{z\in\R^2:|z|<1\}$$ for the open unit disc in $\R^2$ and $\bar{D}$ for its closure.
The differential at $z$ of a (weakly) differentiable map $\varphi$ between smooth manifolds is denoted $D \varphi_z$.

For a subset $A\subset\R^2$, $|A|$ denotes its Lebesgue measure. If $(X,d)$ is a metric space then we use the notation $\hm_{X}^2(A)$ for the $2$--dimensional Hausdorff measure of a subset $A\subset X$. The normalizing constant is chosen such that $\hm_X^2$ coincides with the $2$--dimensional Lebesgue measure when $X$ is Euclidean $\R^2$. Thus, the Hausdorff $2$--measure $\hm_g^2:=\hm_{(M,g)}^2$ on a 2--dimensional Riemannian manifold $(M,g)$ coincides with the Riemannian area. 
\subsection{Seminorms}\label{sec-energy-area-isotropy}
The \emph{(Reshetnyak) energy} of a seminorm $s$ on $\R^2$ is defined by  $$\mathbf{I}_+^2(s):= \max\{s(v)^2: v\in\R^2, |v|=1\}.$$
If $s$ is a norm on $\R^2$, then the \emph{Jacobian} of $s$ is defined as the unique number $\jac(s)$ satisfying 
$$\hm^2_{(\R^2,s)}(A)=\jac(s)\cdot|A|$$
for some and thus every subset $A\subset \R^2$ such that $|A|>0$. For a degenerate seminorm $s$ we set $\jac(s):=0$. A seminorm $s$ on $\R^2$ is \emph{isotropic} if $s=0$ or if it is a norm and the ellipse of maximal area contained in $\{v\in\R^2: s(v)\leq 1\}$ is a Euclidean ball. If $s$ is a Euclidean seminorm, i.e. if $s$ is induced by a (potentially degenerate) inner product, then $s$ is isotropic precisely if it is a scalar multiple of the standard Euclidean norm~$|\cdot |$.

If $s$ is a seminorm on a $2$-dimensional Euclidean vector space~$V$ then we define the concepts of Jacobian, energy, and isotropy by identifying $V$ with Euclidean $(\R^2,|\cdot|)$ via a linear isometry.
\subsection{Metric space valued Sobolev maps}\label{sec:sobmaps}
Let $(X, d)$ be a proper metric space and let $M$ be a smooth, compact, orientable $2$--dimen\-sional manifold, possibly disconnected and with non-empty boundary.
We fix a Riemannian metric $g$ on $M$ and let  $\Omega\subset M$ be an open set. 
\bd
A measurable $u\colon \Omega\to X$ belongs to the Sobolev space $W^{1,2}(\Omega, X)$ if there exists $h\in L^2(\Omega)$ with the following property. For every real-valued $1$--Lipschitz function $f$ on $X$ the composition $f\circ u$ belongs to the classical Sobolev space $H^{1,2}(\Omega\setminus \partial M)$  and $$|D (f\circ u)_z|_g\leq h(z)$$ for almost every $z\in \Omega$.
\ed

If $u\in W^{1,2}(\Omega, X)$ then for almost every $z\in \Omega$ there exists a seminorm $\apmd u_z$ on $T_zM$, called \emph{approximate metric derivative}, such that
$$\ap\lim_{v \to 0} \ \ \frac{d(u(\exp_z(v)), u(z)) - \apmd u_z (v)}{|v|_g} = 0,$$
where the approximate limit is taken within $T_z M$ and $\exp_z$ denotes the exponential map of~$g$ at~$z$. See \cite{EG15} for the definition of approximate limits.

Assume $N=(N,h)$ is a smooth complete Riemannian manifolds. Then, by Nash's theorem, there is an isometric embedding $\iota\colon N\to \mathbb{R}^m$ (in the Riemannian sense). Equivalently one may define $W^{1,2}(\Omega,N)$ as the set of measurable mappings $u\colon\Omega\to N$ such that $\iota \circ u$ lies in the classical Sobolev space $H^{1,2}(\Omega\setminus \partial M,\mathbb{R}^m)$; compare e.g. Lemma~9.3.3 and Exercise~2 in Section~9 of~\cite{Jos17}. In particular, for every Sobolev map $u\in W^{1,2}(\Omega,N)$ there is a measurable weak differential $Du\colon T\Omega\to TN\subset N\times \mathbb{R}^m$. At almost every $z\in \Omega$ the approximate metric derivative is given by
\begin{equation}
\label{eq:apmd-weakdiff}
\apmd u_z(v)=|D u_z(v)|_h  \ \ \textnormal{for all } v \in T_z \Omega,    
\end{equation}
compare Theorem~6.4 and the subsequent remark in~\cite{EG15}.

The approximate metric derivative allows one to define the Reshetnyak energy and the parametrized Hausdorff area of a Sobolev map using the pointwise quantities introduced in Section~\ref{sec-energy-area-isotropy} above.
\bd
 The (Reshetnyak) energy of $u\in W^{1,2}(\Omega, X)$ with respect to~$g$ is defined by $$E_+^2(u, g):= \int_{\Omega} \mathbf{I}_+^2(\apmd u_z)\;d\hm^2_{g}(z).$$
\ed
The energy $E^2_+$ is conformally invariant in the sense that $$E^2_+(u\circ \varphi,g')=E^2_+(u,g) $$ whenever $\varphi\colon (M',g')\to (M,g)$ is a conformal diffeomorphism.
\bd\label{def:param-Hsdff-area}
The parametrized (Hausdorff) area of $u\in W^{1,2}(\Omega,X)$ is defined by
$$\Area(u):=\int_\Omega \jac(\apmd u_z)\; d\hm^2_g(z).$$
If $A\subset \Omega$ is measurable, then the area of the restriction $u|_A$ is defined analogously.
\ed
It is easy to see that $$\Area(u\circ \varphi)=\Area(u)$$ for any biLipschitz homeomorphism $\varphi\colon \Omega'\to \Omega$. In particular, $\Area(u)$ is independent of the choice of the Riemannian metric~$g$. A measurable map $u\colon\Omega \to X$ satisfies \emph{Lusin's property (N)} if $\mathcal{H}^2_X(u(A))=0$ for every null set $A\subset \Omega$. If $u\in W^{1,2}(\Omega,X)$, then by the area formula
$$\Area(u)\leq \int_X \# \{z\in \Omega: u(z)=x\}\; d\hm^2_X(x),$$
with equality if $u$ satisfies Lusin's property (N); see~\cite{Kar07}.
\bd\label{def:inf-isotropic}
 A map $u\in W^{1,2}(M, X)$ is infinitesimally isotropic with respect to the metric~$g$ on a measurable subset $A\subset M$ if for almost every $z\in A$ the approximate metric derivative $\apmd u_z$ is isotropic with respect to $g(z)$. If no subset $A\subset M$ is specified, it is understood that $u$ is infinitesimally isotropic with respect to $g$ on $M$.
\ed
It is not hard to see that $$\label{eq:ae}\Area(u)\leq E^2_+(u,g),$$
where equality holds precisely if $u$ is infinitesimally isotropic and the approximate metric derivative of $u$ at almost every $z\in M$ is a Euclidean seminorm, compare~\cite{LW17-en-area}.

If $\Omega\subset M\setminus \partial M$ is a Lipschitz domain, then for every $u\in W^{1,2}(\Omega,X)$ there is a well defined \emph{trace} $\trace(u)\in L^{2}(\partial \Omega,X)$. 
If $u$ extends to a continuous map $\bar{u}$ on~$\bar{\Omega}$, then the trace is simply given by $\bar{u}|_{\partial \Omega}$.  Hence, in abuse of notation, we also denote the trace of $u$ by $u|_{\partial \Omega}$. If no continuous extension exists, define $\trace(u)$ locally around $p\in \partial \Omega$ in the following way. Choose an open neighborhood $U$ of $p$ and a biLip\-schitz map $\psi\colon (0,1)\times [0,1)\to M$ such that $\psi((0,1)\times (0,1)) = U\cap \Omega$ and $\psi((0,1)\times\{0\}) = U\cap \partial\Omega$. Then for almost every $s\in (0,1)$ the trace at $\psi(s,0)$ is given by $\lim_{t\searrow 0} (u\circ\psi)(s,t)$, compare \cite{KS93}.
\section{Proof for regular metric spaces}
\label{sec:regcase}
\subsection{The Plateau-Douglas problem for regular configurations}
\label{sec:recall}
Let $\mathcal{M}(k)$ be the family of compact, orientable, smooth surfaces $M$ with $k$ boundary components and such that each connected component of $M$ has non-empty boundary. Denote by~$M_{k,p}$ the, up to a diffeomorphism, unique connected surface in $\mathcal{M}(k)$ of genus~$p$. A \textit{reduction} of $M_{k,p}$ is a surface $M^*\in\mathcal{M}(k)$ with one of the following properties. Either $M^*$ is connected and has genus at most $p-1$ or $M^*$ has several connected components and the total genus of $M^*$ is at most $p$. Since the Euler characteristic of $M_{k,p}$ is given by $$\chi(M_{k,p})=2-2p-k,$$ it follows that $\chi(M^*)>\chi(M_{k,p})$ for any reduction $M^*$ of $M_{k,p}$, and hence $\chi(M^*)=k$ if and only if $M^*$ is the union of $k$ smooth discs. For $M\in\mathcal{M}(k)$ with $n>1$ connected components, we say that $M^*$ is a reduction of $M$ if there exists a partition $M^*=M_1^*\cup...\cup M_n^*$ such that each $M_l^*$ is the reduction of exactly one connected component of $M$. Notice that for any $M\in\mathcal{M}(k)$ there are only finitely many reductions $M^*$ up to diffeomorphism, and that any reduction $M^{**}$ of such $M^*$ is also a reduction of $M$.

 Let $\Gamma=\bigcup \Gamma_j$ be a configuration of $k\geq 1$ rectifiable closed curves in a complete metric space $X$ and $p\geq 0$. By defining 
 $$a_p^*(\Gamma,X):=\min\{a(M^*,\Gamma,X):M^*\text{ is a reduction of }M_{k,p}\},$$
 the Douglas condition~(\ref{eq:cond-Douglas}) can be rewritten as
 $$a_p(\Gamma,X)<a_p^*(\Gamma,X).$$
 We would like to point out that the notion of reduction used here is broader than the one given in \cite{FW-Plateau-Douglas}, where a reduction of the second type consists of \textit{exactly} two connected components. Consequently, the Douglas condition used in \cite{FW-Plateau-Douglas} is \`{a} priori a weaker assumption than the respective one in this article, which turns out to be more convenient for us. However, the two conditions are in fact equivalent.
 This follows since $a_p(\Gamma,X)<\infty$ implies that all curves $\Gamma_j$ lie in the same component of rectifiable connectedness of $X$, i.e. the curves can be joined pairwise by paths of finite length,
  and using this fact one can show that $a(M^*,\Gamma,X)\leq a(M^{**},\Gamma,X)$ whenever $M^{**}$ is a reduction of a reduction $M^*$ of~$M_{k,p}$.

The basis for our proof of Theorem~\ref{thm:main} in the special cases~\ref{main-i} and \ref{main-ii} will be the existence results \cite[Theorem 1.2]{FW-Plateau-Douglas} and \cite[Theorem 1.4.(iii)]{FW-Plateau-Douglas} for Jordan curves, which we now state as a combined theorem for convenience of the reader.

\bt\label{thm:main-QII-Jordan}
Let $X$ be a proper metric space admitting a local quadratic isoperimetric inequality, $\Gamma\subset X$ the disjoint union of $k\geq 1$ rectifiable Jordan curves and $p\geq 0$. If the Douglas condition~(\ref{eq:cond-Douglas}) holds for $p$, $\Gamma$ and $X$, then there exists a continuous $u\in\Lambda(M_{k,p},\Gamma,X)$ and a Riemannian metric $g$ on $M_{k,p}$ such that $$\Area(u)=a_p(\Gamma,X)$$ and $u$ is infinitesimally isotropic with respect to~$g$. Furthermore, if every Jordan curve in $\Gamma$ is chord-arc, then any such $u$ is H\"{o}lder continuous on $M_{k,p}$ and satisfies Lusin's property (N).
\et

Here, a metric space $X$ is said to admit a \textit{$(C,\ell_0)$-quadratic isoperimetric inequality} if every closed Lipschitz curve $c\colon S^1\to X$ of length $\ell(c)\leq\ell_0$ is the trace of a Sobolev disc $u\in W^{1,2}(D,X)$ satisfying $$\Area(u)\leq C\cdot\ell(c)^2.$$ If there is no need to specify the constants $C,\ell_0>0$, we simply say that $X$ admits a \textit{local quadratic isoperimetric inequality}. A Jordan curve $\Gamma$ is called \textit{chord-arc} if it is biLipschitz equivalent to $S^1$.

The following replacement lemma will be used in the proof of Lemma~\ref{lem:fill-cyl-atmost}. It follows from the proof of \cite[Lemma 4.8]{LW-intrinsic} and the gluing result \cite[Theorem~1.12.3]{KS93}. While \cite[Lemma 4.8]{LW-intrinsic} is stated for disc-type surfaces, the arguments in the proof thereof are local around the boundary curve and can be applied without changes to the present situation.
\bl\label{lem:reparam}
Let $X$ be a complete metric space admitting a local quadratic isoperimetric inequality, $\Gamma\subset X$ a configuration of $k\geq 1$ rectifiable closed curves and $M\in\mathcal{M}(k)$. Then for every $u\in \Lambda(M,\Gamma,X)$ and $\varepsilon >0$ there is $v\in \Lambda(M,\Gamma,X)$ such that
$$\Area(v)\leq\Area(u)+\varepsilon$$ and the continuous representative of $\trace(v)|_{\partial M_i}$ is a constant speed parametrization for each $i\in\{1,\dots,k\}$.
\el

Lemma~3.2 is applied in the proofs of Propositions~5.1 and~6.1 in~\cite{FW-Plateau-Douglas}. It is one of the implications in~\cite{FW-Plateau-Douglas} making use of the assumption of a local quadratic isoperimetric inequality. In fact the only implications needing this assumption and used in the proof of the existence result therein may be phrased as Lemmas~\ref{lem:reduc-sys-bound} and~\ref{lem:reduc-tr-equicont} below. While these lemmas seem to heavily rely on the assumption, it is an open question whether Lemma~\ref{lem:reparam}, which enters in their proofs, holds true without it or not.

\subsection{Proof of Theorem~\ref{thm:main}.\ref{main-i}}
\label{sec:proofreg}
Let $X$ be a complete metric space and $\Gamma$ a configuration of $k\geq 1$ rectifiable closed curves $\Gamma_j$ in $X$. Since the Douglas condition fails as soon as $k>1$ and one of the curves $\Gamma_j$ is constant, and since the minimization problem is trivial for a single constant curve $\Gamma$, we  may assume without loss of generality that $\Gamma_1,\dots ,\Gamma_k$ are all nonconstant. For each $j$, let $S_j$ be a geodesic circle of circumference $\ell(\Gamma_j)$, let $\gamma_j\colon S_j\to X$ be a unit speed parametrization of $\Gamma_j$ and $Z_j:=S_j\times [0,1]$ be the cylinder equipped with the product metric. We define the quotient space $X_\Gamma$ as the disjoint union $X\sqcup Z_1\sqcup\dots\sqcup Z_k$ under the identification $\gamma_j(p)\sim (p,0)$ for every $p\in Z_j$, and we equip this space with the quotient metric,~see for example~\cite{BrH99}. Furthermore, let $P_\Gamma\colon X_\Gamma\to X$ be the projection given by
$$ P_\Gamma(x):=\begin{cases}x & x\in X, \\ \gamma_j(p) & x=(p,t)\in Z_j.\end{cases}$$  The proof of \cite[Lemma 4.1]{Creutz-singular} shows that $X\subset X_\Gamma$ isometrically and $P_\Gamma\colon X_\Gamma\to X$ is a $1$-Lipschitz retraction. Lastly, we define $\Tilde{\Gamma}_j$ as the (equivalence class of the) rectifiable curve $p\mapsto (p,1)\in Z_j,\, p\in S_j$, and $\Tilde{\Gamma}$ as the configuration consisting of the curves $\Tilde{\Gamma}_1,\dots, \Tilde{\Gamma}_k$. Then $\Tilde{\Gamma}$ is a configuration of disjoint chord-arc curves and $P_\Gamma \circ \Tilde{\Gamma}_j=\Gamma_j$ for each~$j$.

\bl \label{lem:fill-cyl-atleast}
Let $X$ be a complete metric space, $\Gamma\subset X$ a configuration of $k\geq 1$ rectifiable closed curves and $M\in\mathcal{M}(k)$. Then for every $u\in\Lambda(M,\Tilde{\Gamma},X_\Gamma)$ one has $P_\Gamma \circ u \in \Lambda(M,\Gamma,X)$ and
$$\Area(u)\geq\Area(P_\Gamma\circ u)+\sum_{j=1}^k\hm{\color{red}}^2(Z_j).$$
In particular, one has the inequality
$$a(M, \Tilde{\Gamma}, X_\Gamma)\geq a(M, \Gamma, X) + \sum_{j=1}^k\hm^2(Z_j).$$
\el

\begin{proof}
Let $u\in \Lambda(M,\Tilde{\Gamma},X_\Gamma)$. Without loss of generality, we may assume that $M$ is connected. By the 1-Lipschitz continuity of $P_\Gamma$, we have that $P_\Gamma\circ u\in \Lambda(M,\Gamma,X)$. Since $P_\Gamma(Z_j)$ is contained in the rectifiable curve $\Gamma_j$, the area formula in Section~\ref{sec:sobmaps} implies that
$$\Area\left((P_\Gamma\circ u)|_{u^{-1}(Z_j)}\right)=0.$$
Thus, since the restriction $P_\Gamma|_X$ is an isometry, we obtain
\[
\Area(u)=\Area(u|_{u^{-1}(X)})+\sum_j\Area\left(u|_{u^{-1}(Z_j)}\right)=\Area(P_\Gamma \circ u)+\sum_j\Area\left(u|_{u^{-1}(Z_j)}\right).
\]
To complete the proof, it therefore suffices to show that
\begin{equation}
\label{eqi}
\Area\left(u|_{u^{-1}(Z_j)}\right)\geq \mathcal{H}^2(Z_j)
\end{equation}
for each $j$. In order to see this, fix $j$ and define $Y_j$ as the quotient space $X_\Gamma / A$, where $A:=X \cup \bigcup_{i\neq j} Z_i$. Then $Y_j$ is isometric to $Z_j/(S_j\times \{0\})$. Hence $Y_j$ is homeomorphic to $\bar{D}$ and, by \cite[Theorem 3.2]{Cre20a}, admits a local quadratic isoperimetric inequality. Furthermore, let $Q_j\colon X_\Gamma\to Y_j$ be the 1-Lipschitz map given by $Q_j(x):=[x]$. Then the composition $Q_j\circ u$ is an element in $\Lambda(M,Q_j\circ \Tilde{\Gamma}, Y_j)$ with 
\begin{equation}\label{eqi1}
\Area(Q_j\circ u)=\Area\left(u|_{u^{-1}(Z_j)}\right).\end{equation}
Let $\partial M_i$ be the boundary component of $M$ such that $\trace(u)|_{\partial M_i}$ is an element of $\Gamma_j$, and consider $M$ embedded into a smooth compact surface $\Tilde{M}\in\mathcal{M}(1)$ of same genus as that of $M$ such that each boundary component $\partial M_l$ bounds a topological disc in $\Tilde{M}$ except for $\partial M_i$, which agrees with the boundary component of $\Tilde{M}$. The map $Q_j\circ u$ extends naturally onto $\Tilde{M}$ by setting its value on $\Tilde{M}\setminus M$ to be $[x]$ for any $x\in X$, yielding a map $v_j\in \Lambda(\Tilde{M}, Q_j\circ \Tilde{\Gamma}_j, Y_j)$ satisfying \begin{equation}\label{eqi2}\Area(v_j)=\Area(Q_j\circ u).\end{equation}
 Apparently, there exists a surface $M^*$, either being equal to $\Tilde{M}$ or else being a reduction of it, such that $$a(\Tilde{M},Q_j\circ\Tilde{\Gamma}_j, Y_j)=a(M^*,Q_j\circ\Tilde{\Gamma}_j,Y_j)$$ and the Douglas condition holds for $M^*,Q_j\circ\Tilde{\Gamma}_j$ and $Y_j$. Hence by Theorem~\ref{thm:main-QII-Jordan} there exists a continuous map $w_j\in \Lambda(M^*, Q_j\circ\Tilde{\Gamma}_j, Y_j)$ 
 satisfying Lusin's property~(N) and \begin{equation}\label{eqi3}\Area(w_j)\leq \Area(v_j).\end{equation}Since $Y_j$ is homeomorphic to $\bar{D}$ with boundary curve $Q_j\circ\Tilde{\Gamma}_j$, it follows that $w_j$ is surjective. Otherwise assume $p\in Y_j\setminus w_j(M^*)$. Then $Q_j\circ\tilde{\Gamma}_j$, considered as a $1$-cycle, would be a generator of $H_1(Y_j\setminus \{p\})\cong H_1(\bar{D}\setminus \{0\})\cong \mathbb{Z}$ and at the same time would bound the $2$-chain defined in $Y_j\setminus \{p\}$ by $w_j$, which is a clear contradiction. Hence, by the area formula, we have \begin{equation}\label{eqi4}
\Area(w_j)=\int_{Y_j} \#\left\{w_j^{-1}(x)\right\}\;d\hm{\color{red}}^2(x)\geq \hm^2{\color{red}}(Y_j)=\hm^2{\color{red}}(Z_j).\end{equation}
Combining \eqref{eqi1}, \eqref{eqi2}, \eqref{eqi3} and~\eqref{eqi4}, we finally obtain~\eqref{eqi}.
\end{proof}
 While we did not need to assume a local quadratic isoperimetric inequality on~$X$ in the previous lemma, this assumption is required  in the proof of the upcoming reverse inequality.
\bl \label{lem:fill-cyl-atmost}
Let $X$ be a complete metric space admitting a local quadratic isoperimetric inequality, $\Gamma\subset X$ a configuration of $k\geq 1$ rectifiable closed curves and $M\in\mathcal{M}(k)$. Then one has
$$a(M, \Tilde{\Gamma}, X_\Gamma)\leq a(M, \Gamma, X) + \sum_{i=1}^k \hm^2(Z_j).$$
\el
\begin{proof}
Let $\varepsilon>0$. By Lemma~\ref{lem:reparam} there exists $v\in \Lambda(M,\Gamma,X)$ such that
$$\Area(v)\leq a(M,\Gamma,X)+\varepsilon$$
and such that $\trace(v)|_{\partial M_i}$ is a constant speed parametrization for each $i$. We relabel the boundary components of $M$ such that $\trace(v)|_{\partial M_j}$ is an element of $\Gamma_j$ for each~$j$. Embed $M$ diffeomorphically into a smooth compact surface $\Tilde{M}\in\mathcal{M}(k)$ such that $\Tilde{M}\setminus \operatorname{int}(M)$ is the disjoint union of $k$ smooth cylinders $\Omega_j$ with boundary, each $\Omega_j$ having $\partial M_j$ as one boundary component. Notice that $\Tilde{M}$ is diffeomorphic to $M$. Now if $\tilde{\gamma}_j\colon S_j \to X_\Gamma$ is a constant speed parametrization of $\Tilde{\Gamma}_j$, then the inclusion $\iota_j\colon Z_j\to X_\Gamma$ is a Lipschitz homotopy between $\Tilde{\gamma}_j$ and $\gamma_j$ of area $\hm^2(Z_j)$. Thus, by identifying $\Omega_j$ with $Z_j$ via a biLipschitz homeomorphism, there exist maps $w_j\in W^{1,2}(\Omega_j,X_\Gamma)$ with trace $\Tilde{\gamma}_j$ respectively $ \gamma_j=\trace(v)|_{\partial M_j}$ and of area $\hm^2(Z_j)$. Let $w\colon \Tilde{M}\to X_\Gamma$ be the mapping obtained by stitching $v$ together with every $w_j$ along $\partial M_j$, which is a well-defined element in $W^{1,2}(\Tilde{M},X_\Gamma)=W^{1,2}(M,X_\Gamma)$ by \cite[Thm. 1.12.3]{KS93}. Then $w$ spans $\Tilde{\Gamma}$ and satisfies $$a(M,\Tilde{\Gamma},X_\Gamma)\leq \Area(w)=\Area(v)+\sum_{j=1}^k\Area(w_j)\leq a(M,\Gamma,X)+\sum_{j=1}^k \hm^2(Z_j)+\varepsilon.$$ Since $\varepsilon>0$ was chosen arbitrary, the assertion in the lemma follows and the proof is complete.
\end{proof}
With these preparations at hand, it is now not hard to give a proof of Theorem~\ref{thm:main}.\ref{main-i}.
\begin{proof}[Proof of Theorem~\ref{thm:main}.\ref{main-i}]
Since $X$ admits a local quadratic isoperimetric inequality, it follows from the proof of~\cite[Theorem~3.2]{Cre20a} that $X_\Gamma$ admits a local quadratic isoperimetric inequality as well. Lemma~\ref{lem:fill-cyl-atleast} together with Lemma~\ref{lem:fill-cyl-atmost} imply that one has the equality
\begin{equation}
\label{eqfeq}
    a(\Tilde{M},\Tilde{\Gamma},X_\Gamma)=a(\Tilde{M},\Gamma,X)+\sum_{j=1}^k \hm^2(Z_j)
    \end{equation}
for every $\Tilde{M}\in \mathcal{M}(k)$. Hence the Douglas condition $$a_p(\Tilde{\Gamma},X_\Gamma)<a_p^*(\Tilde{\Gamma},X_\Gamma)$$ holds for $p$, $\Tilde{\Gamma}$ and $X_\Gamma$. Since $\Tilde{\Gamma}$ is a disjoint configuration of chord-arc curves, we have by Theorem~\ref{thm:main-QII-Jordan} that there is a H\"{o}lder continuous $v\in\Lambda(M,\Tilde{\Gamma},X_\Gamma)$ satisfying Lusin's property (N) and a Riemannian metric $g$ on $M$ such that
$$\Area(v)=a_p(\Tilde{\Gamma},X_\Gamma)$$ and $v$ is infinitesimally isotropic with respect to~$g$. By Lemma~\ref{lem:fill-cyl-atleast} and equation~\eqref{eqfeq} the projection $u:=P_\Gamma\circ v\in\Lambda(M,\Gamma,X)$ then satisfies $$\Area(u)=a_p(\Gamma,X).$$ Moreover, since $P_\Gamma$ is isometric on $X$, the map $u$ is infinitesimally isotropic with respect to~$g$ on $M\setminus u^{-1}(\Gamma)\subset M\setminus v^{-1}(X_\Gamma \setminus X)$. Thus the proof of \ref{main-i} is complete.
\end{proof}
\section{Interior Lipschitz regularity}
\subsection{Upper curvature bounds}
\label{sec:glob}
Let $X$ be a metric space. Closed piecewise geodesic curves in $X$ will be denoted $\overline{x_0x_1\dots x_m}$, where $x_i\in X$ indicate the endpoints of the geodesic segments. For $\kappa\in \mathbb{R}$, let $D_\kappa$ be the diameter of the model space~$M^2_\kappa$ of constant curvature~$\kappa$. That is, $D_\kappa=\pi/\sqrt{\kappa}$ for $\kappa >0$ and $D_\kappa=\infty$ for $\kappa\leq 0$. A geodesic triangle $\overline{x y z}$ will be called \emph{$\kappa$-admissible} if $\ell(\overline{xyz})<2D_\kappa$. For every $\kappa$-admissible triangle $\overline{x y z}$, there is a (up to isometry) unique comparison triangle $\overline{ x_\kappa y_\kappa z_\kappa}$ in $M^2_\kappa$ which has the same side lengths. A $\kappa$-admissible triangle $\overline{x y z}$ is called \emph{CAT($\kappa$)} if there is a $1$-Lipschitz map $f\colon \overline{ x_\kappa y_\kappa z_\kappa}\to \overline{x y z}$ such that $f(x_\kappa)=x$, $f(y_\kappa)=y$ and $f(z_\kappa)=z$. We say that $X$ is a \emph{CAT($\kappa$) space} if $X$ is geodesic and every $\kappa$-admissible triangle in~$X$ is $\textnormal{CAT}(\kappa)$, and call $X$ \emph{locally CAT($\kappa$)} if every point in $X$ has a neighbourhood which is a $\Cat(\kappa)$ space. Two standard facts are that $\Cat(\kappa)$ spaces are also $\Cat(\kappa')$ for any $\kappa'\geq \kappa$, and that balls of radius at most $D_\kappa/2$ in $\Cat(\kappa)$ spaces are themselves $\Cat(\kappa)$ spaces. Finally, we say that $X$ is \emph{locally of curvature bounded above} if every point $p\in X$ has a neighbourhood $U_p$ which is a $\Cat(\kappa_p)$ space for some $\kappa_p\in \mathbb{R}$. By the preceeding observations, we may always assume that $\kappa_p>0$ and $U_p$ is a small ball.

If $X$ is geodesic and locally $\Cat(0)$, then the Cartan-Hadamard theorem states that $X$ is a $\Cat(0)$ space if and only if $X$ is simply connected. Aiming to handle also spaces satisfying positive upper curvature bounds, we discuss a variant of this result due to Bowditch. For Lipschitz curves $\gamma_0,\gamma_1\colon S^1\to A\subset X $, we say that $\gamma_0$ is \emph{monotonically homotopic to $\gamma_1$ in $A$} if there exists a continuous homotopy $h\colon[0,1]\times S^1 \to A$ such that $h(0,\cdot)= \gamma_0$, $h(1,\cdot)=\gamma_1$ and $\ell(h(t,\cdot))\leq \ell(\gamma_0)$ for all $t\in [0,1]$. We say that $\gamma$ is \emph{monotonically nullhomotopic in $A$} if $\gamma$ is monotonically homotopic to a constant curve in $A$. If $X$ is a $\Cat(\kappa)$ space, then Reshetnyak's majorization theorem (see for example \cite{AKP19}) implies that every closed Lipschitz curve in~$X$ of length smaller than~$2D_\kappa$ is monotonically nullhomotopic. Dually, the following holds by Theorem~$3.1.2$ in~\cite{Bow95}.
\begin{thm}
\label{thm:bow}
Let $X$ be a proper geodesic metric space, $\kappa \in \mathbb{R}$ and $A\subset X$ be compact such that the $D_\kappa$-neighbourhood of $A$ is locally $\Cat(\kappa)$. If a $\kappa$-admissible triangle $\Delta\subset A$ is monotonically nullhomotopic in~$A$, then $\Delta$ is $\Cat(\kappa)$.
\end{thm}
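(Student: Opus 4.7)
My plan is to prove the theorem by building a discrete version of Reshetnyak's majorization theorem, adapted to the locally $\Cat(\kappa)$ setting. Write $\Delta = \overline{xyz}$ and let $\ell := \ell(\Delta) < 2D_\kappa$. The goal is to construct a $1$-Lipschitz map $f\colon \overline{x_\kappa y_\kappa z_\kappa}\to \Delta$ from the comparison triangle in $M^2_\kappa$ which is the identity on vertices. Since $\Cat(\kappa)$ is equivalent to the existence of a majorizing disc in $M^2_\kappa$ (Reshetnyak), it suffices to exhibit such a majorization for $\Delta$.

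First I would exploit compactness: since $A$ is compact and its $D_\kappa$-neighbourhood $N$ is locally $\Cat(\kappa)$, there is a uniform radius $r\in(0,D_\kappa/2)$ such that every open $r$-ball centered in $A$ is contained in $N$ and is itself a $\Cat(\kappa)$ space (by the standard facts recalled before the theorem). Next, I would discretize the given monotonic nullhomotopy $h\colon [0,1]\times S^1 \to A$. By approximating $h$ through piecewise geodesic closed curves $\gamma_0 = \partial \Delta,\gamma_1,\dots,\gamma_N = \mathrm{const}$ with $\ell(\gamma_i)\leq \ell$, and refining the partition, I may arrange that for each $i$ the two curves $\gamma_i$ and $\gamma_{i+1}$ coincide outside a piecewise geodesic bigon $B_i$ of diameter less than $r$ and with $\ell(B_i)$ small.

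The core step is to inductively build a majorizing region in $M^2_\kappa$. Suppose inductively that we have constructed a $\kappa$-convex region $R_i\subset M^2_\kappa$ together with a $1$-Lipschitz map $f_i\colon R_i\to A$ whose boundary arc maps isometrically to $\gamma_i$ (with vertices of $\Delta$ fixed along the way). For the transition from $\gamma_i$ to $\gamma_{i+1}$, the local bigon $B_i$ lies in some ball of radius $r$, which is a genuine $\Cat(\kappa)$ space; applying Reshetnyak majorization there produces a majorizing disc $D_i\subset M^2_\kappa$ for $B_i$. I would then glue $D_i$ to $R_i$ along the common sub-arc to obtain $R_{i+1}$, using the monotonicity $\ell(\gamma_{i+1})\leq \ell(\gamma_0) < 2D_\kappa$ to keep the construction inside the model disc of radius $D_\kappa$. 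Iterating to $i = N$ collapses the triangle to a point through majorized stages, and tracing through the construction exhibits $\partial\Delta$ as the boundary of a convex region of $M^2_\kappa$ with a $1$-Lipschitz map to $\Delta$; this is precisely the $\Cat(\kappa)$ majorization needed.

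The main obstacle, and the reason this is Bowditch's rather than a textbook exercise, is the gluing step in $M^2_\kappa$: when $\kappa>0$, the model space has finite diameter and limited room to attach discs, and the curves $\gamma_i$ need not remain embedded, so the $R_i$ must be understood as abstract $\Cat(\kappa)$ surfaces (rather than planar regions) whose boundary is mapped to $\gamma_i$. One must check that the attachment of $D_i$ preserves the $\Cat(\kappa)$ condition on $R_{i+1}$ — this follows from Reshetnyak's gluing theorem along a common geodesic sub-arc — and that the total perimeter never exceeds $2D_\kappa$ during the process, which is where the monotonicity hypothesis $\ell(\gamma_i)\leq \ell$ is used in an essential way. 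Once these bookkeeping issues are handled, the conclusion drops out by taking the final step and restricting the resulting $1$-Lipschitz map to the comparison triangle $\overline{x_\kappa y_\kappa z_\kappa}\subset R_0$.
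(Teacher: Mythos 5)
The paper does not reprove this result; its stated proof is simply the citation to Bowditch's Theorem~3.1.2 together with the remark (from Bowditch's Section~3.6) that the argument only uses the $\Cat(\kappa)$ hypothesis in the $D_\kappa$-neighbourhood of a set carrying the nullhomotopy. So any self-contained argument is necessarily a different route from the paper's. Your discrete-majorization outline is in the right spirit, but the gluing step contains a genuine gap. You invoke ``Reshetnyak's gluing theorem along a common geodesic sub-arc,'' yet nothing in your discretization makes the common sub-arc a single geodesic: the $\gamma_i$-side of the bigon $B_i$ is in general a concatenation of several geodesic segments with corners. In a majorizing region (planar or abstract), such a piecewise-geodesic boundary arc with corners bending inward is \emph{not} a convex subset --- the $R_i$-geodesic between two of its points passes through the interior --- so the hypothesis of Reshetnyak's gluing theorem fails, and the inductive claim that $R_{i+1}$ is $\Cat(\kappa)$ is not justified.

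Closing the gap takes more than bookkeeping. If each step inserts a single vertex, replacing a segment $[q,q']$ of $\gamma_i$ by $[q,p]\cup[p,q']$, the common arc is one geodesic segment, and one can verify it is also a geodesic in $R_i$ (a $1$-Lipschitz arc-length-preserving majorization cannot strictly shorten a geodesic), so Reshetnyak gluing applies to that move. But the inverse move --- deleting a vertex --- cannot be a gluing along the cornered side, and realizing it by cutting $R_i$ along the geodesic chord from $\tilde q$ to $\tilde q'$ in general fails to match lengths, since $d_{R_i}(\tilde q,\tilde q')$ may strictly exceed $d_X(q,q')$. Handling these moves coherently is where the Alexandrov-lemma comparisons at the heart of Reshetnyak majorization, and of Bowditch's proof, must enter; your outline waves at this without resolving it. A smaller gap: once the $R_i$ are abstract $\Cat(\kappa)$ surfaces, the comparison triangle $\overline{x_\kappa y_\kappa z_\kappa}$ is not literally a subset of the final $R_N$, so one cannot ``restrict.'' One must apply Reshetnyak majorization once more to $\partial R_N$ inside the $\Cat(\kappa)$ space $R_N$ and then argue that, because the three boundary arcs map isometrically to geodesic sides of $\Delta$, the resulting convex region of $M^2_\kappa$ is forced to be the comparison triangle.
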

Theorem~3.1.2 in \cite{Bow95} is stated under the assumption that the entire space $X$ is locally $\Cat(\kappa)$. However, as discussed in Section~3.6 of~\cite{Bow95}, the argument is local in the $D_\kappa$-neighbourhood of any set in which $\Delta$ is monotonically nullhomotopic, and hence the proof readily gives Theorem~\ref{thm:bow}. As a corollary of Theorem~\ref{thm:bow}, we obtain the following result allowing to derive quantitatively controlled "local globalizations". 
\begin{cor}
\label{cor:CH}
Let $X$ be a proper geodesic metric space, $\kappa\in \mathbb{R}$ and $B(p,r)\subset X$ a ball which is locally $\Cat(\kappa)$. If every triangle $\Delta\subset \bar{B}(p,r/2)$ is monotonically nullhomotopic in $\bar{B}(p,r/2)$, then $\bar{B}(p,\bar{r})$ is a $\Cat(\bar{\kappa})$ space, where $\bar{\kappa}=\bar{\kappa}(\kappa,r)$ and $\bar{r}=\bar{r}(\kappa,r)$ only depend on $\kappa$ and $r$.
\end{cor}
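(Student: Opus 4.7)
The strategy is to apply Bowditch's theorem (Theorem~\ref{thm:bow}) on a suitable neighborhood of $p$ and then to upgrade the resulting triangle-wise $\Cat(\bar{\kappa})$ condition to a full $\Cat(\bar{\kappa})$ structure on a concentric sub-ball.

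Concretely, I would first choose $\bar{\kappa} = \bar{\kappa}(\kappa,r) \geq \max\{\kappa,\, 16\pi^2/r^2\}$, so that $D_{\bar{\kappa}} < r/4$. Setting $A := \bar{B}(p, r/2)$, the $D_{\bar{\kappa}}$-neighborhood of $A$ is then contained in $B(p, 3r/4) \subset B(p, r)$ and is therefore locally $\Cat(\kappa)$, hence locally $\Cat(\bar{\kappa})$, which is hypothesis (a) of Theorem~\ref{thm:bow}. The hypothesis that every triangle in $\bar{B}(p, r/2)$ is monotonically nullhomotopic in $\bar{B}(p, r/2)$ certainly covers every $\bar{\kappa}$-admissible triangle, so Theorem~\ref{thm:bow} yields that every $\bar{\kappa}$-admissible triangle $\Delta \subset \bar{B}(p, r/2)$ is $\Cat(\bar{\kappa})$.

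The second step is to choose $\bar{r} = \bar{r}(\kappa, r)$ small enough that the subspace $\bar{B}(p, \bar{r}) \subset X$ is geodesic in the induced metric. Since $X$ is proper and $B(p, r)$ is locally $\Cat(\kappa)$, a compactness/Lebesgue-number argument on the compact set $\bar{B}(p, r/2)$ gives a uniform radius $\rho > 0$ such that every ball $B(q, \rho)$ with $q \in \bar{B}(p, r/2)$ is contained in a $\Cat(\kappa)$ region. Inside such local $\Cat(\kappa)$ balls geodesics are unique and depend continuously on their endpoints, and Alexandrov convexity says that the distance to $p$ is convex along any sufficiently short geodesic, once we compare with $p$ via the triangle property already established on all of $\bar{B}(p, r/2)$. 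Taking $\bar{r} \leq \min\{\rho/4,\, D_{\bar{\kappa}}/4,\, r/6\}$ and combining the local convexity with the $\Cat(\bar{\kappa})$ triangle property of $\bar{B}(p, r/2)$, one verifies that any geodesic of $X$ joining two points of $\bar{B}(p, \bar{r})$ stays inside $\bar{B}(p, \bar{r})$. Together with the triangle property inherited from $\bar{B}(p, r/2) \supset \bar{B}(p, \bar{r})$, this makes $\bar{B}(p, \bar{r})$ a $\Cat(\bar{\kappa})$ space.

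The hard part will be the geodesic-ness verification in the second step: Bowditch's theorem only delivers a pointwise condition on each $\bar{\kappa}$-admissible triangle, whereas a $\Cat(\bar{\kappa})$ space must also be geodesic as a metric subspace of $X$. Threading together properness of $X$, the uniform local $\Cat(\kappa)$ data coming from compactness of $\bar{B}(p, r/2)$, and the Alexandrov convexity of the distance function to $p$ is the technical heart of the argument, and an explicit tracking of the constants---as sketched above---is what finally pins down admissible choices of $\bar{\kappa}(\kappa,r)$ and $\bar{r}(\kappa,r)$.
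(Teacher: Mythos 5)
Your first step is fine and matches the paper: choose $\bar{\kappa}$ large enough depending on $\kappa,r$ so that $D_{\bar{\kappa}}$ is small compared with $r$, apply Theorem~\ref{thm:bow} with $A=\bar{B}(p,r/2)$, and conclude that every $\bar{\kappa}$-admissible triangle in $\bar{B}(p,r/2)$ is $\Cat(\bar{\kappa})$. The gap is in the second step, and it is real: the Lebesgue-number radius $\rho$ you extract from compactness of $\bar{B}(p,r/2)$ depends on the particular space $X$ and the particular cover by local $\Cat(\kappa)$ balls, not just on $\kappa$ and $r$. Any $\bar{r}$ built from $\rho$ therefore fails the requirement $\bar{r}=\bar{r}(\kappa,r)$. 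This is not a cosmetic point: in the proof of Theorem~\ref{thm:main}.\ref{main-ii} the corollary is applied along an ultralimit argument and the line ``Since $\bar{\kappa}$ and $\bar{r}$ are independent of $\Gamma$\ldots'' is exactly what makes that work, so the uniformity you discarded is the whole content of the corollary.

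The good news is that the machinery you reach for (compactness, Lebesgue number, local uniqueness of geodesics, Alexandrov convexity of $d(\cdot,p)$) is unnecessary. Once Bowditch gives you the triangle-wise $\Cat(\bar{\kappa})$ conclusion on $\bar{B}(p,r/2)$, convexity of the small ball $\bar{B}(p,\bar{r})$ follows directly and with explicit constants: set, as in the paper, $\bar{\kappa}:=\max\{\kappa,4\pi^2r^{-2}\}$ and $\bar{r}:=D_{\bar{\kappa}}/4$. For $x,y\in\bar{B}(p,\bar{r})$ any geodesic triangle $\overline{pxy}$ has perimeter at most $4\bar{r}=D_{\bar{\kappa}}<2D_{\bar{\kappa}}$, hence is $\bar{\kappa}$-admissible, and it lies in $\bar{B}(p,2\bar{r})\subset\bar{B}(p,r/2)$, so by hypothesis it is monotonically nullhomotopic there and Theorem~\ref{thm:bow} makes it $\Cat(\bar{\kappa})$. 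The $1$-Lipschitz comparison map $f$ sends $p_{\bar{\kappa}}$ to $p$, so for any $q$ on the comparison side $\overline{x_{\bar{\kappa}}y_{\bar{\kappa}}}$ one has $d(p,f(q))\le d(p_{\bar{\kappa}},q)\le\bar{r}$, because balls of radius strictly less than $D_{\bar{\kappa}}/2$ in $M^2_{\bar{\kappa}}$ are convex. Thus $\overline{xy}\subset\bar{B}(p,\bar{r})$ and the ball is geodesic. The $\Cat(\bar{\kappa})$ condition for triangles $\overline{xyz}\subset\bar{B}(p,\bar{r})$ then comes from a second invocation of Theorem~\ref{thm:bow}, exactly as in your first step. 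In short: drop the Lebesgue-number and convexity-of-distance detour, and let the Bowditch conclusion applied to triangles with vertex $p$ do the convexity work; then the constants come out depending only on $\kappa$ and $r$ as required.
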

\begin{proof}
Set $\bar{\kappa}:=\max \{\kappa,4\pi^2r^{-2}\}$ and $\bar{r}:=D_{\bar{\kappa}}/4$. Note that $\bar{\kappa}$ is chosen such that $D_{\bar{\kappa}}\leq r/2$. To see that $\bar{B}(p,\bar{r})$ is convex, let $x,y\in \bar{B}(p,\bar{r})$ and observe that any geodesic triangle $\overline{pxy}$ is $\bar{\kappa}$-admissible and contained in $\bar{B}(p,2\bar{r})\subset \bar{B}(p,r/2)$, and hence by assumption monotonically nullhomotopic within~$\bar{B}(p,r/2)$. Then Theorem~\ref{thm:bow} implies that $\overline{pxy}$ is $\textnormal{CAT}(\bar{\kappa})$. Since $\bar{r}< D_{\bar{\kappa}}/2$, it follows that $\overline{pxy}\subset \bar{B}(p,\bar{r})$, and we conclude that $\bar{B}(p,\bar{r})$ is convex. Now let $\overline{xyz}\subset \bar{B}(p,\bar{r})$. Then $\overline{xyz}$ is $\bar{\kappa}$-admissible and monotonically nullhomotopic in $\bar{B}(p,r/2)$. Again Theorem~\ref{thm:bow} implies that $\overline{xyz}$ is $\textnormal{CAT}(\bar{\kappa})$.
\end{proof}
For $\alpha\geq 0$ and $r >0$, we let $S_{\alpha,r}$ be the ball of radius~$r$ around the vertex in the cone over a compact interval of length~$\alpha$  (see \cite{BBI01} for the definition of cones), and call $S_{\alpha,r}$ the \emph{sector} of radius~$r$ and angle~$\alpha$. On any sector, we fix an orientation so that the left leg and the right leg of $S_{\alpha,r}$ are defined. The following lemma generalizes \cite[Lemma 21]{Sta:ar} to spaces satisfying positive upper curvature bounds.
\begin{lem}
\label{lem:secglue}
Let $\kappa\geq 0$, $0<r\leq D_\kappa/2$, $X$ be a proper $\Cat(\kappa)$ space, $p\in X$ and $\eta_1,\dots,\eta_l,\nu_1,\dots, \nu_l\subset X$ geodesic segments all of length $r$ and starting at~$p$. For $i=1,\dots,l$, let $\alpha_i\in [0,\pi]$ be the angle at $p$ between $\eta_i$ and $\nu_i$, and let $S_i$ be the sector of angle $2\pi-\alpha_i$  and radius~$r$. Then the space $Z$, obtained by gluing each sector $S_i$ to $X$ via isometric identifications of its left leg with $\eta_i$ and its right leg with~$\nu_i$, is a $\Cat(\kappa)$ space.
\end{lem}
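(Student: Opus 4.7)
The plan is to deduce the lemma from \thmref{thm:bow}: I will verify that $Z$ is a proper, geodesic, locally $\Cat(\kappa)$ space in which every $\kappa$-admissible triangle is monotonically nullhomotopic; then every such triangle is $\Cat(\kappa)$ by \thmref{thm:bow}, and hence $Z$ is $\Cat(\kappa)$. That $Z$ is proper and geodesic follows from the quotient construction and from the observation that each sector $S_i$, having apex angle $2\pi-\alpha_i\in[\pi,2\pi]$, is isometric to a closed convex pie slice in $\mathbb{R}^2$; in particular each $S_i$ is proper, geodesic and $\Cat(0)$.

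For the local $\Cat(\kappa)$ condition at a point $q\in Z\setminus\{p\}$ the argument is routine: $q$ has a neighborhood which is either an open subset of $X$, an open subset of the flat interior of some $S_i$, or a gluing of a half-neighborhood of $X$ with a half-neighborhood of some $S_i$ along a sub-segment of a leg (or, at a far leg endpoint, along a single point). Since legs are geodesic and therefore convex in both spaces, Reshetnyak's gluing theorem yields local $\Cat(\kappa)$ in each case. The delicate case is $q=p$, where I would analyze the space of directions $\Sigma_p Z$: it is obtained from the $\Cat(1)$ space $\Sigma_p X$ by attaching, for each $i$, an arc of length $2\pi-\alpha_i$ whose endpoints are identified with $\dot\eta_i$ and $\dot\nu_i\in\Sigma_pX$, which lie at distance $\alpha_i\in[0,\pi]$ in $\Sigma_p X$. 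The key observation is a tight systole bound: every simple closed curve in $\Sigma_p Z$ has length at least $2\pi$. Loops inside $\Sigma_p X$ satisfy this by $\Cat(1)$ of $\Sigma_p X$, while a loop traversing an attached arc of length $2\pi-\alpha_i$ must include a further portion connecting $\dot\eta_i$ to $\dot\nu_i$ in $\Sigma_p Z$, of length at least $\alpha_i$, for a total of at least $(2\pi-\alpha_i)+\alpha_i=2\pi$. Combined with local $\Cat(1)$ at each attachment point (again by Reshetnyak gluing), this systole bound forces $\Sigma_p Z$ to be $\Cat(1)$. Berestovskii's cone theorem, together with a direct analysis of small balls around $p$ as gluings of small balls in $X$ with small sector neighborhoods, then yields local $\Cat(\kappa)$ of $Z$ at~$p$.

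For the monotonic nullhomotopy of a $\kappa$-admissible triangle $\Delta\subset Z$, I would use the deformation retraction $R\colon Z\to X$ which fixes $X$ and collapses each sector $S_i$ onto the union of legs $\eta_i\cup\nu_i$ by nearest-point projection within the flat sector; a direct law-of-cosines computation in the Euclidean plane shows this collapse can be realized by a length-monotonic homotopy, using that the sector has apex angle in $[\pi,2\pi]$. The resulting image $R(\Delta)$ is a closed curve in $X$ of length at most $\ell(\Delta)<2D_\kappa$, and is monotonically nullhomotopic inside the $\Cat(\kappa)$ space $X$ by Reshetnyak's majorization theorem. Composing the two monotonic homotopies and invoking \thmref{thm:bow} then completes the proof.

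I expect the main obstacle to be the $\Cat(1)$ verification for $\Sigma_p Z$: since arcs are attached along non-convex pairs of points and the systole inequality $(2\pi-\alpha_i)+\alpha_i=2\pi$ is tight, Reshetnyak's standard gluing theorem does not immediately apply, and the $\Cat(1)$ property must be established by a dedicated triangle-wise or limiting comparison adapted to the specific link geometry.
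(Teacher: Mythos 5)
Your strategy---verify that $Z$ is proper, geodesic, locally $\Cat(\kappa)$ and that every $\kappa$-admissible triangle is monotonically nullhomotopic, then invoke Theorem~\ref{thm:bow}---is genuinely different from the paper's, and, as you correctly suspect, it has a real gap at the link of~$p$. Establishing that $\Sigma_pZ$ is $\Cat(1)$ is more than a routine verification. Your systole bound $(2\pi-\alpha_i)+\alpha_i=2\pi$ is tight, so no strict-inequality globalization criterion applies without a limiting argument you do not supply; a systole bound for \emph{simple} closed curves is not the same as monotone nullhomotopy of every closed curve of length $<2\pi$, which is what a Bowditch-type or curve-shortening argument for $\Sigma_pZ$ actually requires; and passing from $\Cat(1)$ of $\Sigma_pZ$ back to local $\Cat(\kappa)$ of $Z$ near~$p$ via the cone theorem controls only the tangent cone, not an actual neighbourhood of~$p$. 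None of this is resolved in the sketch, and you flag it yourself.

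The paper avoids the link analysis altogether. By induction it suffices to treat $l=1$. One first glues only the subsector $S_{\pi-\alpha,r}$ to $X$, identifying its left leg with~$\eta$; since a single geodesic segment of length $r\leq D_\kappa/2$ is convex on both sides, Reshetnyak's gluing theorem already gives that the resulting space $Y$ is $\Cat(\kappa)$. The crucial observation is that in $Y$ the angle at~$p$ between the free leg $\eta'$ of the subsector and $\nu$ equals $\pi$: the link of $p$ in $Y$ is that of $X$ with an arc of length $\pi-\alpha$ attached by one endpoint at $\dot\eta$, so every path from $\dot{\eta'}$ to $\dot\nu$ traverses this arc and then a path of length at least $\alpha$, giving $(\pi-\alpha)+\alpha=\pi$. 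Hence $\eta'\cup\nu$ is a geodesic of length $2r\leq D_\kappa$ and in particular a convex subset of~$Y$, and $Z$ is recovered by gluing the remaining half-disc $S_{\pi,r}$ to $Y$ along the isometric identification of its diameter with $\eta'\cup\nu$. A second application of Reshetnyak finishes the proof. The idea to take away is to split the sector at the $\pi$-angle: this turns the non-convex pair $\eta\cup\nu\subset X$ into a convex set $\eta'\cup\nu\subset Y$, so that Reshetnyak applies twice and the delicate borderline analysis of $\Sigma_pZ$ is never needed.
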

 In the lemma, the isometric identifications are chosen such that~$p$ corresponds to the vertex point in~$S_i$. In the following, we assume without further mentioning that the orientations of isometric identifcations are chosen in such a natural way.
\begin{proof} 
By induction, it is sufficient to prove the statement for $l=1$, and hence we set $\eta:=\eta_1$, $\nu:=\nu_1$ and $\alpha:=\alpha_1$. Reshetnyak's gluing theorem (see for example~\cite{BrH99}) implies that the space~$Y$, obtained by gluing~$S_{\pi-\alpha,r}$ to~$X$ via an isometric identification of the left leg of~$S_{\pi-\alpha,r}$ and~$\eta$, is a $\Cat(\kappa)$ space. Observe that the angle in~$Y$ between the right leg~$\eta'$ of~$S_{\pi-\alpha,r}$ and~$\nu$ equals~$\pi$ and that the length of the concatenation $\eta'\cup \nu$ is at most~$D_k$. Hence the curve $\eta'\cup \nu$ is a geodesic in~$Y$ and in particular a convex subset of~$Y$, see \cite[Proposition 1.7]{BrH99}. Thus the claim follows from another application of Reshetnyak's theorem upon noting that $Z$ may be constructed alternatively by gluing the sector $S_{\pi,r}$ to $Y$ via isometric identifications of its left leg with $\eta'$ and its right leg with $\nu$.
\end{proof}
\subsection{Proof of Theorem~\ref{thm:main}.\ref{main-ii}}
\label{sec:fincurv}
Let $X$ be a metric space which is locally of curvature bounded above. The \emph{total curvature} of a closed piecewise geodesic curve $\overline{x_0 x_1 \dots x_m}$ in~$X$ is defined by
$$
\sigma(\overline{x_0x_1\dots x_m}):=\sum_{i=0}^m (\pi- \beta_i),
$$
where $\beta_i$ denotes the angle at $x_i$ between the geodesic segments $\overline{x_i x_{i-1}}$ and $\overline{x_i x_{i+1}}$. Let $L$ be a closed rectifiable curve. The curve $\overline{x_0 x_1 \dots x_m}$ is called \emph{inscribed to~$L$} if the points $x_0,x_1,\dots, x_m$ lie on $L$ and are traversed by $L$ in cyclic order. The \emph{total curvature of~$L$}, denoted~$\sigma(L)$, may be defined as $\lim_{n\to \infty}  \sigma(L_n)$, where $(L_n)$ is a sequence of closed piecewise geodesic curves which are inscribed to~$L$ and converge uniformly to~$L$, see \cite[Proposition~2.4]{ML03}.
\begin{proof}[Proof of Theorem~\ref{thm:main}.\ref{main-ii}]
Let $X$ be as in the statement of the theorem. Assume first $L=\overline{x_0x_1\dots x_m}$ is a closed piecewise geodesic curve in $X$. For $i=0,\dots,m$, we set $S_i:=S_{\pi-\beta_i,1}$ and $Q_i:=I_i\times [0,1]$, where $I_i\subset \R$ is a compact interval of length~$d(x_i,x_{i+1})$. We define a geodesic metric cylinder~$\hat{Z}_L$ by gluing the left end interval of each~$Q_i$ isometrically to the right leg of $S_i$ and the right end interval of each~$Q_i$ to the left leg of~$S_{i+1}$. Then, by Reshetnyak's gluing theorem, balls of radius at most $\ell(L)/4$ in $\hat{Z}_L$ are $\Cat(0)$ spaces. Denote the inner boundary curve of~$\hat{Z}_L$ by~$\bar{L}$ and the outer boundary curve of $\hat{Z}_L$ by $\hat{L}$. There exist a $1$-Lipschitz retraction $\hat{P}_L\colon \hat{Z}_L\to \bar{L}$ such that $\hat{P}_L \circ \hat{L}=\bar{L}$, as well as a $(\ell(L)+\sigma(L))$-Lipschitz homotopy $h_L\colon S^1\times [0,1]\to \hat{Z}_L$ between $\bar{L}$ and $\hat{L}$ such that $\Area(h)=\mathcal{H}^2(\hat{Z}_L)$. In particular, $\bar{L}$ is a geodesic circle of circumference~$\ell(L)$ and there is a canonical unit-speed parametrization $c_L\colon \bar{L}\to L$. Now let $L$ be any closed rectifiable curve of finite total curvature. All the properties discussed for piecewise geodesic curve are quantitative and hence stable under ultralimits; see e.g.~\cite{AKP19} for the definition and properties of ultralimits. Thus we may approximate $L$ by a sequence $(L_n)$ of $L$-inscribed piecewise geodesic curves, perform the construction for each $L_n$, pass to an ultralimit and obtain that there exist $\hat{Z}_{L}$, $\hat{L}$, $\bar{L}$, $c_L$, $h_{L}$, $\hat{P}_{L}$ as above, all enjoying the very same properties.

Let $\hat{Z}_j:=\hat{Z}_{\Gamma_j}$ for $j=1,\dots,k$. We define the quotient space $\hat{X}_\Gamma$ as the disjoint union $X\sqcup \hat{Z}_1 \sqcup \dots \sqcup \hat{Z}_k$ under the identification $c_{\Gamma_j}(p)\sim p$ for $p\in \bar{\Gamma}_j$, and we equip this space with the quotient metric. Also, we let $\hat{P}_\Gamma\colon\hat{X}_\Gamma\to X$ be the $1$-Lipschitz retraction given by $\hat{P}_\Gamma(x):=x$ for $x\in X$ and $\hat{P}_\Gamma (x)=\hat{P}_{\Gamma_j}(x)$ for $x\in \hat{Z}_j$. By Reshetnyak's majorization theorem each $\hat{Z}_j$ admits a local quadratic isoperimetric inequality. This, together with the facts that $\hat{P}_\Gamma$ is $1$-Lipschitz and $X$ admits a local quadratic isoperimetric inequality, makes it straight forward to modify the proof of \cite[Theorem 3.2]{Cre20a} and derive that the space $\hat{X}_\Gamma$ admits a local quadratic isoperimetric inequality. Let $\hat{\Gamma}$ be the configuration formed by $\hat{\Gamma}_1,...,\hat{\Gamma}_k$. The properties discussed above allow us to imitate the proofs of Lemmas~\ref{lem:fill-cyl-atleast} and~\ref{lem:fill-cyl-atmost} for the configuration $\hat{\Gamma}\subset \hat{X}_\Gamma$, and hence derive that
\begin{equation}
\label{eq:catarea}
a(\Tilde{M},\hat{\Gamma},\hat{X}_\Gamma)=a(\Tilde{M},\Gamma,X)+\sum_{i=1}^k\mathcal{H}^2(\hat{Z}_i)
\end{equation}
for every $\Tilde{M}\in\mathcal{M}(k)$.

So far we have not achieved any advantage from our more complicated construction over the one in Section~\ref{sec:proofreg}. However, and this is the crucial difference, now we claim that $\hat{X}_\Gamma$ is locally of curvature bounded above. Since $\hat{X}_\Gamma\setminus X$ is locally $\Cat(0)$, it suffices to show that every $p \in X$ has a $\Cat$ neighbourhood within $\hat{X}_\Gamma$. So let $p$ in $X$ and choose $\kappa>0$ as well as $0<r<D_\kappa/2$ such that $B_X(p,r)$ is a $\Cat(\kappa)$ space. The proof that $X$ is locally of curvature bounded above will be completed by showing that $\bar{B}_{\hat{X}_\Gamma}(p,\bar{r})$ is a $\Cat(\bar{\kappa})$ space, where $\bar{\kappa}$ and $\bar{r}$ are as in the statement of Corollary~\ref{cor:CH}. Since $\bar{\kappa}$ and $\bar{r}$ are independent of $\Gamma$ and the $\Cat(\bar{\kappa})$ condition is stable under ultralimits, we lose no generality in assuming that $\Gamma_1,\dots,\Gamma_k$ are piecewise geodesic curves. Thus it remains to verify the assumptions of Corollary~\ref{cor:CH}. Clearly, $B_{\hat{X}_\Gamma}(p,r)\setminus \Gamma$ is locally $\Cat(\kappa)$. Since we assumed $\Gamma$ consists of piecewise geodesic curves, for $q\in B_{\hat{X}_\Gamma}(p,r)\cap \Gamma$ and $s>0$ sufficiently small the ball $\bar{B}_{\hat{X}_\Gamma}(q,s)$ is obtained from $\bar{B}_X(q,s)$ as the space $Z$ is obtained from $X$ in Lemma~\ref{lem:secglue}. Thus the lemma states that $\bar{B}_{\hat{X}_\Gamma}(q,s)$ is a $\Cat(\kappa)$ space and hence we conclude that $B_{\hat{X}_\Gamma}(p,r)$ is locally $\Cat(\kappa)$. To verify the other assumption of Corollary~\ref{cor:CH}, let $\Delta\subset \bar{B}_{\hat{X}_\Gamma}(p,r/2)$ be a geodesic triangle. Sliding $\Delta$ down to $X$ we see that $\Delta$ is monotonically homotopic in~$\bar{B}_{\hat{X}_\Gamma}(p,r/2)$ to a curve $\eta\subset X$. Since $\bar{B}_X(p,r/2)$ is a $\Cat(\kappa)$ space and $\ell(\eta)<2 D_\kappa$, Reshetnyak's majorization theorem implies in turn that $\eta$ is monotonically nullhomotopic in~$\bar{B}_X(p,r/2)$. Hence we may apply Corollary~\ref{cor:CH} and conclude the claim.

Departing from \eqref{eq:catarea} and the fact that $\hat{X}_\Gamma$ admits a local quadratic isoperimetric inequality, we can proceed as we did when proving~\ref{main-i} in the last section. The advantage is now that by~\cite{Ser95}, see also~\cite[Theorem~1.3]{BFHMSZ18}, the minimizer $v\in \Lambda(M,\hat{X}_\Gamma,\hat{\Gamma})$ is locally Lipschitz on $M\setminus \partial M$, and hence so is our final solution $u=\hat{P}_\Gamma \circ v$. In order to apply these regularity results, note that $v$ is a continuous harmonic map into a space which is locally of curvature bounded above. Harmonicity of $v$ follows since $v$ is infinitesimally isotropic and $\hat{X}_\Gamma$ is locally of curvature bounded from above and hence has property (ET), see~\cite[Section~11]{LW15-Plateau}.
\end{proof}
\begin{rmrk}
\label{rem:hoelder+lip}
The map~$u$ we produce in the proof of Theorem~\ref{thm:main}.\ref{main-ii} is also globally H\"{o}lder continuous on $M$. This follows as in the proof of Theorem~\ref{thm:main}.\ref{main-i} upon noting that the configuration $\hat{\Gamma}$ we construct consists of chord-arc curves.
\end{rmrk}
\section{General Case}
\label{sec:gencase}
Throughout this section, we use the terminology introduced in the beginning of Section~\ref{sec:regcase}.
\subsection{Approximating sequences}
\label{sec:appseq}
Let $X$ be a complete metric space. We call a metric space~$Y$ an \emph{$\varepsilon$-thickening of} ~$X$ if~$Y$ contains~$X$ isometrically and $X$ is $\varepsilon$-dense in~$Y$. We will need the following variant of the thickening results obtained in~\cite{Wen08-sharp} and \cite{LWYar}.
\bl
\label{prop:thick}
There is a universal constant~$C\geq 0$ such that for every proper metric space $X$ and $\varepsilon>0$, there exists a $(C \varepsilon)$-thickening~$Y$ of~$X$ such that $Y$ is proper and admits a $\left(C,\varepsilon \right)$-quadratic isoperimetric inequality.
\el
If $X$ is geodesic, then Lemma~\ref{prop:thick} follows readily from \cite[Lemma 3.3]{LWYar} and in this case, the space $Y$ may also be chosen geodesic. This version suffices to obtain Theorem~\ref{thm:main} in the special case that $X$ is geodesic, and hence in particular to obtain Theorem~\ref{thm:main0}. Thus for the convenience of a reader who is only interested in Theorem~\ref{thm:main} for geodesic target spaces, the general proof of Lemma~\ref{prop:thick} is postponed to the appendix.

 Let $X$ be a proper metric space and $(Y_n)_{n\in \mathbb{N}}$ a sequence of proper $\varepsilon_n$-thickenings of $X$. We call $(Y_n)$ an \emph{$X$-approximating sequence} if $\varepsilon_n\to 0$. The following consequence of the generalized Rellich-Kondrachov compactness theorem,~\cite[Theorem 1.13]{KS93}, allows to pass from a sequence of maps in approximating spaces to a limit map in~$X$.
\begin{prop}\label{prop:Rell-Kond-comp}
Let $X$ be a proper space and $\Gamma$ be a configuration of $k\geq 1$ disjoint rectifiable Jordan curves in $X$. Let $M\in\mathcal{M}(k)$ be connected and endowed with a Riemannian metric $g$. Assume that there exist an $X$-approximating sequence $(Y_n)_{n\in \mathbb{N}}$ and mappings $u_n\in \Lambda(M,\Gamma,Y_n)$ of uniformly bounded energies $E^2_+(u_n,g)$ and such that the traces $\trace(u_n)\colon\partial M\to \Gamma$ are equicontinuous with respect to~$g$. Then there is $u\in \Lambda(M,\Gamma,X)$ such that
\begin{equation}
\label{eqae}
    \Area(u)\leq \limsup_{n\to \infty} \Area(u_n)\quad\& \quad  E^2_+(u,g)\leq \limsup_{n\to \infty} E^2_+(u_n,g).
\end{equation}
\end{prop}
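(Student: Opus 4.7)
The plan is to isometrically embed all the thickenings $Y_n$ into a single complete ambient space, apply the metric-valued Rellich-Kondrachov compactness theorem to the sequence $(u_n)$ regarded as maps into this common target, argue that the resulting limit map takes values in $X$, and finally recover the correct trace using the equicontinuity of $\trace(u_n)$ on $\partial M$.

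As a common target I would take the space $Z$ obtained by gluing all the thickenings $Y_n$ along their common subspace $X$, endowed with the resulting quotient (path) metric (cf.~\cite{BrH99}). Each $Y_n$ embeds isometrically into $Z$, and for every $y \in Y_n \subset Z$ one has $d_Z(y, X) \leq \varepsilon_n$. Since $X$ is proper (hence complete) and thus closed in the completion of $Z$, we may replace $Z$ by its completion without losing any of these properties.

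Regarded as maps into $Z$, the sequence $(u_n) \subset W^{1,2}(M, Z)$ has uniformly bounded energies, and the equicontinuous traces take values in the compact set $\Gamma \subset X$, so a Poincar\'{e}-type inequality bounds $(u_n)$ in $L^2$-distance from any fixed basepoint $p_0 \in X \subset Z$. The metric-valued Rellich-Kondrachov compactness theorem \cite[Thm.~1.13]{KS93} then yields, along a subsequence, an $L^2$-limit $u \in W^{1,2}(M, Z)$. Since $d_Z(u_n(z), X) \leq \varepsilon_n \to 0$ for almost every $z$ and a further subsequence converges pointwise a.e., the limit satisfies $u(z) \in X$ almost everywhere, and hence $u \in W^{1,2}(M, X)$. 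The two inequalities in~\eqref{eqae} then follow from the standard lower semicontinuity of energy under $L^2$-convergence of Sobolev maps with uniformly bounded energy together with the analogous lower semicontinuity of parametrized Hausdorff area in proper metric targets (see~\cite{LW15-Plateau}).

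It remains to verify $u \in \Lambda(M, \Gamma, X)$. Arzel\`{a}-Ascoli applied to the equicontinuous parametrizations $\trace(u_n) \colon \partial M \to \Gamma$ yields, along a further subsequence, uniform convergence to a continuous map $c \colon \partial M \to \Gamma$. Since the Jordan curves of $\Gamma$ are disjoint, each restriction $c|_{\partial M_i}$ is a uniform limit of parametrizations of a single curve $\Gamma_{j(i)}$, and is therefore itself such a parametrization. The step I expect to be most delicate is the identification $\trace(u) = c$: the Rellich-Kondrachov theorem only provides interior $L^2$-convergence, while the trace is defined in Section~\ref{sec:Prelim} as a pointwise a.e.\ limit along transverse fibres $t \mapsto (u \circ \psi)(s, t)$ in a boundary collar $\psi\colon (0,1)\times [0,1) \to M$. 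The hard part is promoting the interior $L^2$-convergence to a.e.\ fibrewise convergence on such a collar, which, via Fubini applied together with the uniform energy bound, combined with the uniform boundary convergence $\trace(u_n) \to c$, should force $\trace(u) = c$ and thereby give $u \in \Lambda(M, \Gamma, X)$.
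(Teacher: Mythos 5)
Your proposal matches the paper's argument essentially step for step: glue the $Y_n$ along $X$ into a single space $Z$, apply the metric Rellich--Kondrachov compactness theorem of Korevaar--Schoen, observe that the $L^2$-limit lands in $X$ because $\varepsilon_n\to 0$, invoke lower semicontinuity of energy and area, and control the boundary behaviour via Arzel\`a--Ascoli. The one step you flag as delicate --- promoting interior $L^2$-convergence to identification of $\trace(u)$ with the uniform limit $c$ --- is exactly what the paper resolves by citing \cite[Theorem~1.12.2]{KS93}; note also that the glued space $Z$ is in fact \emph{proper} (not merely complete after completion), which is the hypothesis the Korevaar--Schoen compactness theorem actually uses, and which follows from properness of the $Y_n$ together with $\varepsilon_n\to 0$.
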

The proof is the following standard argument, which is similar to respective steps e.g. in the proofs of~\cite[Theorem 1.5]{GWar} and~\cite[Theorem 5.1]{LWYar}.
\begin{proof}
Let $Z$ be the proper metric space obtained by gluing all the spaces $Y_n$ along~$X$. Note that $Y_n\subset Z$ isometrically and hence $\Lambda(M,\Gamma,Y_n)\subset \Lambda(M,\Gamma,Z)$ for each $n\in \mathbb{N}$. For fixed $p\in \Gamma$,  \cite[Lemma 2.4]{FW-Plateau-Douglas} implies that there is a constant~$C$ such that
\begin{equation*}
\int_M d^2(p,u_n(z)) \, d\hm^2_g(z)\leq C \cdot \left(\textnormal{diam}(\Gamma)^2+E^2_+(u_n,g)\right)
\end{equation*}
for all $n\in \mathbb{N}$. In particular,
$$\sup_{n\in \N} \left[ \int_M d^2(p,u_n(z)) \, d\hm^2_g(z) + E^2_+(u_n,g) \right]<\infty.$$
Thus by the metric space version of the Rellich-Kondrachov compactness theorem,~\cite[Theorem 1.13]{KS93}, there is $v\in W^{1,2}(M,Z)$ such that $v_j\to v$ in $L^2(M,Z)$. In fact, since $(Y_n)_{n\in \mathbb{N}}$ is an approximating sequence, we may assume that $v$ takes values in $X\subset Z$ and hence $v\in W^{1,2}(M,X)$. By lower semicontinuity of area and energy, see e.g.~\cite{LW15-Plateau}, the inequalities~\eqref{eqae} are satisfied for $u$. Finally, the Arzel\`{a}-Ascoli theorem and \cite[Theorem 1.12.2]{KS93} imply that $v\in \Lambda(M,\Gamma,X)$. 
\end{proof}
\subsection{Reductions of fillings}
\label{sec:reductionsfillings} 
Let $X$ be a complete metric space, $p\geq 0$ and $\Gamma\subset X$ a configuration of $k\geq 1$ disjoint rectifiable Jordan curves $\Gamma_j$. The two following results are needed for the proof of Lemma~\ref{lem:alt} and can be extracted from the proofs of \cite[Proposition 6.1]{FW-Plateau-Douglas} and \cite[Proposition 5.1]{FW-Plateau-Douglas} respectively. For the first lemma, we assume that $k+p>2$, which is equivalent to the assumption that the surface $M_{k,p}$ is neither of disc- nor of cylindrical type. In this case $M_{k,p}$ may be endowed with a \emph{hyperbolic} metric, which we define to be a Riemannian metric~$g$ of constant sectional curvature $-1$ and such that the boundary $\partial M_{k,p}$ is geodesic with respect to~$g$. By a \textit{relative geodesic} in $(M_{k,p},g)$ we mean either a simple closed geodesic in $M_{k,p}$ or a geodesic arc with endpoints on $\partial M_{k,p}$ that is non-contractible via a homotopy of curves of the same type. We define $\sys_{\rm rel}(M_{k,p},g)$ as the infimal length of relative geodesics in~$(M_{k,p},g)$. Furthermore, we choose for each $\rho>0$ a parameter $\rho'_\Gamma=\rho'_\Gamma(\rho)$ as in the first paragraph in the proof of \cite[Proposition~6.1]{FW-Plateau-Douglas}. That is, for each $\rho>0$ we choose $0<\rho'_{\Gamma}< \rho$ such that whenever two points $x,x'\in\Gamma$ satisfy $d_X(x,x')\leq \rho'_\Gamma$, then they lie on the same Jordan curve~$\Gamma_j$ and the shorter segment of $\Gamma_j$ between $x$ and $x'$ has length at most $\rho$. The notation emphasizes that $\rho'_\Gamma$ only depends on the induced metric on $\Gamma\subset X$.
\bl \label{lem:reduc-sys-bound}
Let $C,K,\rho>0$. Assume $X$ admits a $(C,2\rho)$-quadratic isoperimetric inequality and $g$ is a hyperbolic metric on $M_{k,p}$ such that $$\sys_{\rm rel}(M_{k,p},g)<\min\left\{\frac{\rho'^2_{\Gamma}(\rho)}{4K}, \arsinh\left(\frac{1}{\sinh(2)}\right)\right\}.$$
Then for every $u\in\Lambda(M_{k,p},\Gamma,X)$ with $E_+^2(u,g)\leq K$, there exist a reduction~$M^*$ of~$M_{k,p}$ and a map $u^*\in\Lambda(M^*, \Gamma, Y)$ such that
$$\Area(u^*)\leq\Area(u)+8C\rho^2.$$
\el
An analogue of the above lemma holds for cylindrical $M_{k,p}$ endowed with a \textit{flat} metric, which we define as a Riemannian metric with vanishing sectional curvature and such that the Riemannian area of $(M_{k,p},g)$ is equal to 1 and the boundary $\partial M_{k,p}$ geodesic. The analogue follows by using a basic flat collar (instead of a hyperbolic one) in the proof of \cite[Proposition 6.1]{FW-Plateau-Douglas}. Compare also the respective remark in the proof of \cite[Theorem 1.2]{FW-Plateau-Douglas}.

For the second lemma, we assume that $k+p\geq 2$, hence we only exclude that $M_{k,p}$ is of disc-type. Let $g$ be a Riemannian metric on $M_{k,p}$ and $0<\delta_g<1$ be so small that every point $z_0\in\partial M_{k,p}$ has a neighbourhood in $(M_{k,p},g)$ which is the image of the set \[B:=\{z\in\C:|z|\leq 1\text{ and }|z-1|<\sqrt{\delta_g}\}\]under a 2-biLipschitz diffeomorphism $\psi$ with $z_0=\psi(1)$.
\bl \label{lem:reduc-tr-equicont}
Let $C,K,\rho>0$. Assume that $X$ admits a  $(C,2\rho)$-quadratic isoperimetric inequality and $0<\delta\leq\delta_g$ is so small that
$$\pi\cdot\left(\frac{8K}{|\log(\delta)|}\right)^{\frac{1}{2}}<\rho'_\Gamma(\rho).$$
If there exist $u\in\Lambda(M_{k,p}, \Gamma, Y)$ with $E_+^2(u,g)\leq K$ and a subarc $\gamma^-\subset\partial M_{k,p}$ satisfying
$$\ell_g(\gamma^-)\leq \delta\quad\&\quad\ell_X(\trace(u)\circ \gamma^-)>\rho,$$ then there exist a reduction $M^*$ of $M_{k,p}$ and a map $u^*\in\Lambda(M^*, \Gamma, X)$ such that
$$\Area(u^*)\leq\Area(u)+8C\rho^2.$$
\el
\subsection{Reductions of approximating sequences}
\label{sec:reductionsapp}
Let $X$ be a proper metric space and $\Gamma$ be a configuration of $k\geq 1$ disjoint rectifiable Jordan curves in $X$ and $p\geq 0$. The next proposition is going to be important in the proof of Theorem~\ref{thm:main}. 
\bp\label{prop:mainstep}
Let $(Y_n)$ be an $X$-approximating sequence. If there exist maps $u_n\in \Lambda(M_{k,p},\Gamma,Y_n)$ satisfying
$$ a:=\limsup_{n\to \infty} \Area(u_n)<a_p^*(\Gamma,X),$$
then there exists $u\in \Lambda(M_{k,p},\Gamma,X)$ such that $\Area(u)\leq a$. Moreover, for any sequence $(g_n)$ of Riemannian metrics on $M_{k,p}$, there exists $u$ as above and a Riemannian metric $g$ on $M_{k,p}$ such that
$$E_+^2(u,g)\leq \limsup_{n\to\infty} E_+^2(u_n,g_n).$$
\ep
The proposition follows by repeatedly applying the next lemma.
\bl
\label{lem:alt}
Let $(Y_n)$ be an $X$-approximating sequence, $M\in\mathcal{M}(k)$, $(g_n)$ be a sequence of Riemannian metrics on $M$ and $u_n\in \Lambda(M,\Gamma,Y_n)$ be fillings such that $\Area(u_n)$ is uniformly bounded. Then one of the following two options holds. Either there is $u\in \Lambda(M,\Gamma,X)$ and a Riemannian metric~$g$ on $M$ such that
 $$\Area(u)\leq \limsup_{n\to\infty}\Area(u_n)\quad\&\quad E_+^2(u,g)\leq\limsup_{n\to\infty}E_+^2(u_n,g_n),$$ or there exist a reduction $M^*$ of $M$, an $X$-approximating sequence $(Y_n^*)$ and maps $u_n^*\in\Lambda(M^*,\Gamma,Y_n^*)$ such that
 \begin{equation}
 \label{eq:limsup-reduc}
   \limsup_{n\to\infty}\Area(u_n^*)\leq \limsup_{n\to\infty}\Area(u_n). 
 \end{equation}
\el
\begin{proof}[Proof of Proposition~\ref{prop:mainstep}]
Let $M$, $Y_n$, $u_n$ and $g_n$ be as in the proposition. If the first possibility in Lemma~\ref{lem:alt} when applied to these elements is true, i.e. if the existence of $u\in\Lambda(M,\Gamma,X)$ and a metric $g$ on $M$ as in this lemma is given, then the proposition follows immediately. We claim that the second possibility in the lemma cannot occur. Otherwise, we could iteratedly apply Lemma~\ref{lem:alt} to $M^*$, the sequences $(Y_n^*)$ and $(u_n^*)$ given by the lemma and arbitrarily chosen metrics $g_n^*$ on $M^*$, as well as their respective successors, until eventually the first possibility holds. This has to be the case after finitely many iterations, since the Euler characteristic strictly increases when passing to a reduction, but is also bounded from above by $k$ in our setting. Thus we would obtain a reduction $M^*$ of $M$ and a map $u\in \Lambda(M^*,\Gamma,X)$ such that
$$
\Area(u)\leq \limsup_{n\to \infty} \Area(u_n)<a_p^*(\Gamma,X),
$$
which gives a contradiction.
\end{proof}
At the end of this section, we give a proof for Lemma~\ref{lem:alt}. It is based on Proposition~\ref{prop:Rell-Kond-comp} as well as Lemmas~\ref{lem:reduc-sys-bound} and \ref{lem:reduc-tr-equicont}.
\begin{proof}[Proof of Lemma~\ref{lem:alt}]

Without loss of generality, we may assume that $M$ is connected. Define $$a:=\limsup_{n\to\infty}\Area(u_n)<\infty\quad\text{\&}\quad e:=\limsup_{n\to\infty}E_+^2(u_n,g_n).$$ If $e$ is infinite, we choose a sequence of auxiliary metrics $g_n'$ on $M$ satisfying $$E_+^2(u_n,g_n')\leq \frac{4}{\pi}\Area(u_n)+1,$$ which exist by \cite[Theorem 1.2]{FW-epsConf} and \cite[Section 5]{FW-epsConf}. Thus, after potentially redefining $g_n:=g_n'$, we may assume that $e$ is finite.

We first address the special setting where $\Gamma$ is a single Jordan curve and $M$ a disc-type surface. We may assume that $M=\bar{D}$ and, since all Riemannian metrics on $\bar{D}$ are conformally equivalent, that each $g_n$ is equal to the standard Euclidean metric $g_{\mathrm{Eucl}}$. Now precompose each~$u_n$ with a conformal diffeomorphism~$\varphi_n$ of~$\bar{D}$ such that $\trace(u_n\circ\varphi_n)$ satisfies for each $n$ the same prefixed three-point condition on $\partial D$ and $\Gamma$,~see p. 1149 in~\cite{LW15-Plateau}. Note that the maps $v_n:=u_n\circ\varphi_n$ satisfy $\Area(v_n)=\Area(u_n)$ and $E_+^2(v_n,g_{\mathrm{Eucl}})=E_+^2(u_n,g_{\mathrm{Eucl}})$. 
It then follows by \cite[Proposition 7.4]{LW15-Plateau} that the family $\{\trace(v_n):n\in\N\}$ is equicontinuous, and therefore by Proposition~\ref{prop:Rell-Kond-comp} that there exists $u\in\Lambda(\bar{D}, \Gamma,X)$ with
$$\Area(u)\leq \limsup_{n\to\infty}\Area(v_n)=a\quad\text{\&}\quad E_+^2(u,g_{\mathrm{Eucl}})\leq \limsup_{n\to\infty}E_+^2(v_n,g_{\mathrm{Eucl}})=e$$
as in the first option proposed by the lemma.

From now on, we assume that $M$ is a connected surface which is not of disc-type. Since every conformal class of Riemannian metrics on $M$ has a hyperbolic representative (respectively a flat one if $M$ is of cylindrical type), we lose no generality in assuming that all the metrics $g_n$ are hyperbolic (respectively flat). In the rest of the proof, we discuss three different cases of outcomes in which ultimately either Lemma~\ref{lem:reduc-sys-bound}, Lemma~\ref{lem:reduc-tr-equicont} or Proposition~\ref{prop:Rell-Kond-comp} is used to deduce one of the options stated in the lemma itself.

First assume that
\begin{equation}\label{eq:sys-bound}
    \inf \{\sys_{\rm rel}(M,g_n):n\in\N\}>0.
\end{equation}
Then by \cite[Theorem 3.3]{FW-Plateau-Douglas} (respectively its analogue for flat metrics) there exist diffeomorphisms $\varphi_n$ of $M$ and a metric $g$ on $M$ such that the pullback-metrics $\varphi_n^*g_n$ converge (up to a subsequence) smoothly to $g$. This convergence implies for the maps $v_n:=u_n\circ\varphi_n\in\Lambda(M,\Gamma,Y_n)$ that
$$E_+^2(v_n,g)\leq C_n\cdot E_+^2(u_n,g_n),$$
where $C_n\geq 1$ tends to $1$ as $n\to\infty$. In particular, the energies $E_+^2(v_n,g)$ are uniformly bounded.
Now assume furthermore that the family
\begin{equation}\label{eq:equicont}
    \{\trace(v_n):n\in\N\}\quad\text{is equicontinuous} 
\end{equation}
with respect to the metric $g$. Then by Proposition~\ref{prop:Rell-Kond-comp} there exists $u\in\Lambda(M,\Gamma, X)$ with
$$\Area(u)\leq a\quad\text{\&}\quad E_+^2(u,g)\leq e$$
as in the first option of the lemma.

In the remaining two cases, we discuss the outcomes if either the bound (\ref{eq:sys-bound}) does not hold; or if it does indeed, but property (\ref{eq:equicont}) fails for the traces of the constructed maps $v_n\in\Lambda(M, \Gamma, Y_n)$. Let $$\rho_j:=\frac{1}{\sqrt{C2^{j+3}}},$$
where $C\geq 0$ is the universal constant from Lemma~\ref{prop:thick}, and $\rho_j':=\rho'_\Gamma(\rho_j)$ for each $j\in\N$. We claim that in either of these subcases, there exist a sequence of reductions $M_j^*$ of $M$, a subsequence $(u_{n_j})\subset \Lambda(M, \Gamma, Y_{n_j})$, $(2C\rho_j)$-thickenings $Y_j^*$ of $Y_{n_j}$ and fillings $$u_j^*\in\Lambda(M_j^*, \Gamma, Y_j^*)$$ such that 
$$\Area(u_j^*)\leq \Area(u_{n_j})+2^{-j}.$$
The existence of a sequence as implied in the lemma is then true by the following two observations. Firstly, there are only finitely many reductions of $M$ up to diffeomorphism, hence we may assume that each $M_j^*$ is equal to the same reduction $M^*$ of $M$ by passing to a subsequence of $M_j^*$. Secondly, the spaces $Y_j^*$ are $(\varepsilon_{n_j}+2C\rho_j)$-thickenings of $X$, where $\varepsilon_n$ is the thickening parameter of $Y_n$, and thus $(Y_j^*)$ an $X$-approximating sequence.

We continue by showing the claim and first suppose that (\ref{eq:sys-bound}) is violated. We only discuss the case for hyperbolic metrics, the situation for flat metrics being analogous. The assumption on the systoles of $g_n$ implies that there exists a subsequence $(g_{n_j})$ such that
$$ \sys_{\rm rel}(M,g_{n_j})=:\lambda_j\to 0.$$
Choosing this subsequence appropriately, we may assume that
$$\lambda_j<\min \left\{ \frac{{\rho'_j}^2}{4K},\arsinh\left(\frac{1}{\sinh(2)}\right)\right\},$$
where we define $K:=\sup_n E_+^2(u_n,g_n)<\infty$. By Lemma~\ref{prop:thick}, for each $j$ there exists a $(2C\rho_j)$-thickening $Y_j^*$ of $Y_{n_j}$ admitting a $(C, 2\rho_j)$-quadratic isoperimetric inequality. Since the spaces $Y_j^*$ contain $X$ (and hence $\Gamma$) isometrically and since the metrics $g_n$ are all hyperbolic, we have by Lemma~\ref{lem:reduc-sys-bound} that there exist reductions $M_j^*$ of $M$ and maps $u_{j}^*\in\Lambda(M_j^*,\Gamma, Y_j^*)$ with
$$\Area(u_j^*)\leq \Area(u_{n_j})+8C \rho_j^2\leq \Area(u_{n_j})+2^{-j}.$$
This shows the claim in the first subcase.

Lastly, we address the case where (\ref{eq:sys-bound}) is true, but (\ref{eq:equicont}) is violated for the obtained metric~$g$. Choose for each $j\in\N$ a number $0<\delta_j\leq \delta_g$ such that
$$\pi\cdot\left(\frac{8K}{|\log(\delta_j)|}\right)^{\frac{1}{2}}\leq \rho'_j.$$ From the assumption of nonequicontinuity of $\{\trace(v_n)\}$, it follows that there exists $\varepsilon>0$ such that for every $j$ there exists a map $\trace(v_{n_j})\colon M\to Y_{n_j}$ and a segment $\gamma_j^-\subset\partial M$ satisfying
 $$\ell_{g}(\gamma_j^-)\leq \delta_j\quad\&\quad\ell_X(\trace(v_{n_j})\circ\gamma_j^-)>\varepsilon.$$
Notice that for all $j$ big enough we have that $\rho_j\leq\varepsilon$, so in particular $$\ell_X(\trace(v_{n_j})\circ\gamma_j^-)>\rho_j.$$ 
 Let $Y_j^*$ be given analogously as in the previous subcase. Then by Lemma~\ref{lem:reduc-tr-equicont} there exist reductions $M_j^*$ of $M$ and mappings $u_j^*\in\Lambda(M_j^*,\Gamma, Y_j^*)$ satisfying
$$\Area(u_j^*)\leq\Area(v_{n_j})+8C\rho_j^2\leq\Area(u_{n_j})+2^{-j}.$$
This shows the claim in the second subcase and completes the proof of the lemma.
\end{proof}
\subsection{Proof of the main result}
\label{sec:proofgen}
Finally, we are able to complete the proof of Theorem~\ref{thm:main}.
 \begin{proof}[Proof of Theorem~\ref{thm:main}]
 The statements \ref{main-i} and \ref{main-ii} of the theorem have already been proved in Sections~\ref{sec:proofreg} and \ref{sec:fincurv}. Thus it remains to show \ref{main-iii} as well as existence in the general case, where $X$ might not admit a local quadratic isoperimetric inequality and $\Gamma$ might be a configuration of overlapping or self-intersecting curves.
 
 We begin with the proof of part \ref{main-iii} and assume that $\Gamma$ is a collection of disjoint rectifiable Jordan curves. For $n\in \mathbb{N}$ we set $Y_n:=X$ and choose maps $u_n\in \Lambda(M,\Gamma,X)$ such that
 \[
 \Area(u_n)\leq a_p(\Gamma,X) +2^{-n}.
 \]
 Since we assumed that the Douglas condition holds for $p$, $\Gamma$ and $X$, we may apply Proposition~\ref{prop:mainstep} to the sequences $(Y_n)$ and $(u_n)$. This shows that  $$\Lambda_{\mathrm{min}}:=\{u\in\Lambda(M,\Gamma,X):\Area(u)=a_p(\Gamma,X)\}$$ is nonempty. Choose sequences of maps $u_n\in \Lambda_{\mathrm{min}}$ and Riemannian metrics $g_n$ on $M$ such that $$\lim_{n\to\infty} E_+^2(u_n,g_n)=\inf\{E_+^2(w,h):w\in\Lambda_{\mathrm{min}},\,h\text{ a Riemannian metric on }M\}=:e.$$ Applying Proposition~\ref{prop:mainstep} to the sequences $(Y_n)$, $(g_n)$ and $(u_n)$, one sees that there exist $u\in\Lambda_{\mathrm{min}}$ and a Riemannian metric $g$ on $M$ such that  $E_+^2(u,g)= e.$ Then by \cite[Corollary 1.3]{FW-epsConf} $u$ is infinitesimally isotropic with respect to~$g$. This completes the proof in the special case that the configuration is assumed to consist of disjoint Jordan curves.
 
 We move on to the general case. Let $(X_n)$ be an $X$-approximating sequence, where every $X_n$ admits some local quadratic isoperimetric inequality: such an approximating sequence exists by Proposition~\ref{prop:thick}. 
 Then $(Y_n):=((X_n)_\Gamma)$ defines an $X_\Gamma$-approximating sequence, where the collar extensions are performed as defined in Section~\ref{sec:proofreg}. By Lemma~\ref{lem:fill-cyl-atmost}, there exist maps $u_n\in \Lambda(M,\Tilde{\Gamma},Y_n)$  such that
 \[
 \Area(u_n)\leq a_p(\Gamma,X_n)+\sum_{j=1}^k\mathcal{H}^2(Z_j)+2^{-n}\leq a_p(\Gamma,X)+\sum_{j=1}^k\mathcal{H}^2(Z_j)+2^{-n}.
 \]
 Then by Lemma~\ref{lem:fill-cyl-atleast}, and since the Douglas condition holds for $p$, $\Gamma$ and $X$, one has
\[
\limsup_{n\to \infty} \Area(u_n)\leq a_p(\Gamma,X)+\sum_{j=1}^k\mathcal{H}^2(Z_j)<a_p^*(\Gamma,X)+\sum_{j=1}^k\mathcal{H}^2(Z_j)\leq a_p^*(\Tilde{\Gamma},X_\Gamma).
\]
Thus applying Proposition~\ref{prop:mainstep} to the sequences $(Y_n)$ and $(u_n)$ shows that the Douglas condition holds for $p$, $\Tilde{\Gamma}$ and $X_\Gamma$ and that 
\[
a_p(\Tilde{\Gamma},X_\Gamma)\leq a_p(\Gamma,X)+\sum_{j=1}^k\mathcal{H}^2(Z_j).
\]
Since $\Tilde{\Gamma}$ is a configuration of disjoint Jordan curves, the Douglas condition and the first part of the proof imply that there exist $v\in\Lambda(M,\Tilde{\Gamma},X_\Gamma)$ and a Riemannian metric $g$ on $M$ such that $\Area(v)=a_p(\Tilde{\Gamma},X_\Gamma)$ and $v$ is infinitesimally isotropic with respect to~$g$. For the projection $u:=P_\Gamma\circ v$ Lemma~\ref{lem:fill-cyl-atleast} implies that $u\in\Lambda(M,\Gamma,X)$ with $$\Area(u)\leq\Area(v)-\sum_{j=1}^k\mathcal{H}^2(Z_j)\leq a_p(\Gamma,X),$$
and thus $\Area(u)=a_p(\Gamma,X)$. Furthermore, the composition $P_\Gamma\circ v$ agrees with $v$ on the complement of $v^{-1}(Z)=u^{-1}(\Gamma)$, hence $u$ is infinitesimally isotropic on $M\setminus u^{-1}(\Gamma)$ with respect to~$g$. This concludes the proof of the theorem in the general case.
 \end{proof}
 \subsection{Translation to the smooth setting}
\label{sec:translation}
To obtain Theorem~\ref{thm:main0}, we make the following observations, where $M\in\mathcal{M}(k)$, $(X,h)$ is a complete Riemannian manifold and $u\in W^{1,2}(M,X)$.
\begin{itemize}
    \item By the Hopf-Rinow theorem, $X$ defines a proper geodesic metric space.
\item Homogeneously regular Riemannian manifolds admit a local quadratic isoperimetric inequality. See \cite{Jos85} for the definition and compare Section~4.3 in~\cite{Cre20} for the simple argument.
 \item Smooth Riemannian manifolds are locally of curvature bounded above, compare for example~\cite[Theorem~II.1A.6]{BrH99}.
\item  Compact $C^2$ curves in smooth Riemannian manifolds have finite total curvature, see~\cite{CFM10}. 
\item As a consequence of \eqref{eq:apmd-weakdiff}, for almost every $z\in M$ the approximate metric derivative $\apmd u_z$ defines a Euclidean seminorm on $T_zM$, and hence $u$ is infinitesimally isotropic if and only if it is weakly conformal.
\item Weakly conformal area minimizers in $X$ are minimizers of the Dirichlet energy, and thus weakly harmonic in the classical sense. Continuous weakly harmonic maps between Riemannian manifolds are however smooth by~\cite[Theorem 9.4.1]{Jos17}.
\end{itemize}
With these observations at hand, Theorem~\ref{thm:main} is easily seen to imply Theorem~\ref{thm:main0}.
\section{Minimizers under the conditions of cohesion and adhesion}
\label{sec:adh}
Let $X$ be a complete metric space, $M$ a smooth compact and connected surface and $\eta>0$. A mapping $u\colon M\to X$ is said to be \textit{$\eta$-cohesive} if $u$ is continuous and
$$\ell(u(c))\geq \eta$$
for every non-contractible closed curve $c$ in $M$.

\bd
A family $\mathcal{F}$ of maps from $M$ to $X$ is said to satisfy the condition of cohesion if there exists $\eta>0$ such that every map in $\mathcal{F}$ is $\eta$-cohesive.
\ed

Now let $c\subset M$ be an embedded arc such that the endpoints of $c$ lie on $\partial M$ and let $u\colon M\to X$ be continuous. If the endpoints of $c$ lie on a single component $\partial M_j$, then they divide $\partial M_j$ into two components~$c^-$ and $c^+$, where the notation is chosen such that $\ell(u(c^-))\leq \ell(u(c^+))$. Let $\bar{\rho}\colon(0,\infty)\to (0,\infty)$ be a function such that~$\bar{\rho}(\rho)\leq \rho$ for every $\rho \in (0,\infty)$. We say that $u\colon M\to X$ is \emph{$\bar{\rho}$-adhesive} if $u$ is continuous and for every arc $c$ with endpoints in $\partial M$ and of image-length $\ell(u(c))\leq \bar{\rho}(\rho)$, one has that the endpoints lie in the same connected component of $\partial M$ and
$$\ell(u(c^-))<\rho.$$
\bd
A family $\mathcal{F}$ of maps from $M$ to $X$ is said to satisfy the condition of adhesion if there exists a function $\bar{\rho}\colon(0,\infty)\to (0,\infty)$ as above such that every map in $\mathcal{F}$ is $\bar{\rho}$-adhesive.
\ed

Let $\Gamma$ be a configuration of $k\geq 1$ rectifiable closed curves in $X$ and $M\in\mathcal{M}(k)$. Set $$e(M,\Gamma,X):=\inf\{E_+^2(u,g):u\in\Lambda(M,\Gamma,X), \ g \text{ a Riemannian metric on } M\}.$$
An \textit{energy minimizing sequence in} $\Lambda(M,\Gamma,X)$ is a sequence of pairs $(u_n,g_n)$ of mappings $u_n\in\Lambda(M,\Gamma,X)$ and Riemannian metrics $g_n$ on $M$ such that $$ E_+^2(u_n,g_n)\to e(M,\Gamma,X)$$ as $n$ tends to infinity.

\bt\label{thm:existence-area-min-coh-adh}
Let $X$ be a proper metric space and $\Gamma\subset X$ a configuration of $k\geq 1$ rectifiable closed curves. Let $M\in \mathcal{M}(k)$ be connected. If there exist an energy minimizing sequence in $\Lambda(M,\Gamma,X)$ satisfying the conditions of cohesion and adhesion, then there exist $u\in\Lambda(M,\Gamma,X)$ and a Riemannian metric $g$ on $M$ such that
$$E_+^2(u,g)=e(M,\Gamma,X).$$
For any such $u$ and $g$ the map $u$ is infinitesimally isotropic with respect to~$g$.
\et
 If $X$ is a complete Riemannian manifold, then energy minimizers are precisely weakly conformal area minimizers. For more general spaces~$X$ however, the relation is more complicated and energy minimizers need not be area minimizers,~see for example~\cite{LW17-en-area,LW15-Plateau}. Nevertheless, one can obtain existence of area minimizers for singular configurations in proper metric spaces if there exists an area minimizing sequence satisfying the conditions of cohesion and adhesion by modifying the proofs of \cite[Theorem 1.6]{FW-epsConf} and \cite[Proposition 5.3]{FW-epsConf} accordingly. However, as in \cite[Theorem 1.6]{FW-epsConf} and \cite[Proposition 5.3]{FW-epsConf}, either the obtained area minimizers are potentially not infinitesimally isotropic, or one has to choose a somewhat different interpretation of the term 'area'.
 
\begin{proof}[Proof of Theorem~\ref{thm:existence-area-min-coh-adh}] 
It follows from~\cite[Corollary 1.3]{FW-epsConf} that any energy minimizing pair $(u,g)$ is infinitesimally isotropic. Thus it remains to show existence of such a pair.

First assume that $M$ is not of disc-type. If $\Gamma$ is a configuration of disjoint Jordan curves, then any continuous $u\in \Lambda(M,\Gamma,X)$ satisfies a $\rho'_\Gamma$-condition of adhesion, where $\rho'_\Gamma$ is as in Section~\ref{sec:reductionsfillings}. In fact, under this observation, the proof of Theorem~\ref{thm:existence-area-min-coh-adh} for such $M$ is a straightforward generalization of the proof of \cite[Theorem 8.2]{FW-Plateau-Douglas}. Namely, if one replaces in the statements of Propositions~8.3 and~8.4 in \cite{FW-Plateau-Douglas} the assumption that $\Gamma$ consists of disjoint Jordan curves by the assumption that $u$ is $\bar{\rho}$-adhesive, the proofs become virtually identical upon replacing $\rho'=\rho'_\Gamma$ by~$\bar{\rho}$. With these modified propositions at hand, the proof of Theorem~\ref{thm:existence-area-min-coh-adh} is completed as is that of \cite[Theorem 8.2]{FW-Plateau-Douglas}.

Finally assume that $\Gamma$ is a single curve and that $M=\bar{D}$. If $\Gamma$ is constant, the result is trivial. Otherwise we may represent $\Gamma$ as a composition of 3 curves $\Gamma_1,\Gamma_2, \Gamma_3$ of equal length. We also decompose $S^1$ into three consecutive arcs $\bar{\Gamma}_1$, $\bar{\Gamma}_2,\bar{\Gamma}_3$ of equal length. We say that a continuous map $u\in \Lambda(M,\Gamma,X)$ satisfies the \emph{$3$-arc condition} if $u|_{\bar{\Gamma}_i}$ is a parametrization of $\Gamma_i$ for every $i=1,2,3$. Fix $K\geq 0$ and adhesiveness function $\bar{\rho}\colon(0,\infty)\to (0,\infty)$. Let $\mathcal{F}$ be the family of maps $u\in\Lambda(M,\Gamma,X)$ which are $\bar{\rho}$-adhesive, satisfy the 3-arc condition and have energy $E_+^2(u,g_{\mathrm{Eucl}})\leq K$. We claim that the trace family $\{u|_{S^1}:u~\in\mathcal{F}\}$ is equicontinuous. To prove this claim, we fix $0<\varepsilon <\ell(\Gamma)/3$, $p\in S^1$ and $u\in \mathcal{F}$. Let $0<\delta <1$ be so small that 
\[
\pi \left(\frac{2K}{|\log \delta |}\right)^{\frac{1}{2}}<\bar{\rho}(\varepsilon).
\]
For $0<r<1$, denote by $c_r$ the arc $\{z\in \bar{D}: |z-p|=r\}$. By the Courant-Lebesgue lemma, \cite[Lemma 7.3]{LW15-Plateau},  there is $r\in (\delta,\sqrt{\delta})$ such that $\ell(u\circ c_r)\leq \bar{\rho}(\varepsilon)$. The $\bar{\rho}$-adhesiveness then implies that $\ell(u\circ c_r^-)\leq \varepsilon$, and hence it follows from the $3$-arc condition together with the choice of $\varepsilon$ that $c_r^-=B(p,r)\cap S^1$. Thus, for any $x\in B(p,\delta)\cap S^1$, one has $d(u(x),u(p))\leq \varepsilon$. Since the choice of $\delta$ was independent of $u$ and $p$, the claimed equicontinuity follows.

Now let $(u_n,g_n)$ be an energy minimizing sequence which is $\bar{\rho}$-adhesive. Since all metrics on the disc are conformally equivalent, we may assume that $g_n=g_{\mathrm{Eucl}}$ for each $n\in \mathbb{N}$. Furthermore, after precomposing with Moebius transforms, one has that all $u_n$ satisfy the $3$-arc condition. Thus by the claim the sequence $(u_n|_{S^1})$ is equicontinuous and hence Proposition~\ref{prop:Rell-Kond-comp} implies the existence of the desired energy  minimizer.
\end{proof}
\section{Appendix}
In this section we discuss the proof of Lemma~\ref{prop:thick}. A metric space $X$ will be called \emph{$\delta$-geodesic}, where $\delta>0$, if for all $x,y\in X$ satisfying $d(x,y)<\delta$ there is a curve $\gamma$ in $X$ joining $x$ to $y$ such that $\ell(\gamma)=d(x,y)$. Lemma~\ref{prop:thick} is only a slight strengthening of the following consequence of~\cite[Lemma 3.3]{LWYar}. 
\begin{lem}
\label{prop:thick2}
There is a universal constant~$C\geq 0$ such that for every proper, $\delta$-geodesic metric space $X$ and $0<\varepsilon\leq\delta$, there exists an $\varepsilon$-thickening~$Y$ of~$X$ such that $Y$ is proper and satisfies a $\left(C,\varepsilon/C \right)$-quadratic isoperimetric inequality.
\end{lem}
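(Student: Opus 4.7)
The plan is to derive Lemma~\ref{prop:thick2} as a consequence of \cite[Lemma~3.3]{LWYar}, which supplies the analogous thickening result under the stronger assumption that $X$ is geodesic. The key point is that the thickening $Y$ in \cite{LWYar} is produced by a local procedure: short Lipschitz curves in $X$ are filled by adjoining controlled $2$-cells to $X$, and the only role played by the geodesic hypothesis on $X$ is to furnish short length-realizing segments between pairs of nearby points. Since we require $\varepsilon \leq \delta$, these segments are precisely what the $\delta$-geodesic assumption provides.

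First I would replay the construction of \cite[Lemma~3.3]{LWYar} verbatim, merely substituting the $\delta$-geodesic property wherever geodesicity is invoked. Concretely, fix a countable dense subset $S \subset X$, select for each pair $p,q\in S$ with $d(p,q)<\varepsilon$ a curve $\gamma_{pq}$ realizing the distance (which exists by the $\delta$-geodesic assumption), and attach flat Euclidean triangular cells along these curves to form $Y$, endowed with the induced quotient length metric. The four required properties, namely that $Y$ is proper, that the inclusion $X\hookrightarrow Y$ is isometric, that $X$ is $\varepsilon$-dense in $Y$, and that $Y$ satisfies a $(C,\varepsilon/C)$-quadratic isoperimetric inequality, then follow exactly as in \cite{LWYar}: properness and $\varepsilon$-density are inherited from local finiteness of the glued cells and from the diameter bound $\leq \varepsilon$ of each cell; the isometric embedding rests on the fact that shortcuts through the attached triangles cannot shorten $X$-distances, since each cell is $\mathrm{CAT}(0)$ and its vertices realize the original distances; and the quadratic isoperimetric inequality is obtained by approximating any short Lipschitz loop in $Y$ by a closed chain in $S$ and filling it with a collection of the attached triangles of total area quadratic in its length.

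The main obstacle is verifying that no step of the argument in \cite{LWYar} tacitly invokes geodesic connectivity of $X$ at scales beyond $\delta$. In principle every distance and every curve that appears in the construction is of size at most a universal constant multiple of $\varepsilon$, so this audit should be routine, but it must be performed with care. Should any step nevertheless use geodesicity at larger scales, a natural workaround is to first replace $X$ by the intrinsic length metric on each of its $\delta$-connected components: properness of $X$ together with the metric Hopf--Rinow theorem for locally compact length spaces then ensures that this length space is a proper geodesic metric space, and it agrees with $(X,d)$ at all scales below $\delta$. Applying \cite[Lemma~3.3]{LWYar} to this auxiliary space at scale $\varepsilon\leq\delta$ yields a thickening $Y$ that is still a proper $\varepsilon$-thickening of $(X,d)$ and satisfies the desired $(C,\varepsilon/C)$-quadratic isoperimetric inequality.
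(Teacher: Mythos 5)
Your primary route matches the paper's proof exactly: the paper argues that the only use of geodesicity in \cite[Lemma~3.3]{LWYar} (whose construction goes back to \cite{Wen08-sharp}) is to bound the intrinsic diameter of a ball of radius~$2\delta$ by~$4\delta$, and this estimate holds under the $\delta$-geodesic hypothesis alone. Your description of the construction is not quite accurate --- \cite{LWYar} and \cite{Wen08-sharp} glue in modified copies of small metric balls $B(z,2\delta)$ over a maximal $\delta$-separated net, not Euclidean triangles over a dense subset --- but this does not affect your argument, which uses only that the construction is local.

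The fallback you sketch does, however, contain a genuine gap. The intrinsic length metric $\hat d$ on a $\delta$-connected component agrees with~$d$ at scales below~$\delta$ but can strictly exceed~$d$ at larger scales. For instance, if $M$ is a compact Riemannian manifold of diameter greater than some $C\geq\delta$ and one sets $d':=\min(d_M,C)$, then $(M,d')$ is proper and $\delta$-geodesic, yet the intrinsic length metric of $(M,d')$ is $d_M\neq d'$. An $\varepsilon$-thickening $Y$ of $(X,\hat d)$ then contains $X$ isometrically only with respect to $\hat d$, not with respect to the original metric~$d$, so $Y$ fails to be an $\varepsilon$-thickening of~$X$ in the required sense. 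Since the audit succeeds, this fallback is never invoked, but as written it would not serve as a correct alternative.
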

\cite[Lemma 3.3]{LWYar} is stated for spaces which are globally geodesic, though the proof readily gives the claimed result for $\delta$-geodesic spaces. Namely, in the proof the assumption only comes into play when estimating the diameter of the small ball $B_z$ with respect to its induced intrinsic metric by twice the radius. This estimate holds in a $\delta$-geodesic space as soon as the radius of the ball is bounded from above by~$\delta$. More precisely, this estimate is used twice: on p.~241 of~\cite{Wen08-sharp} to estimate the diameter of~$X_z$ and on p.~242 to find the curves $\bar{\gamma}_j$. \par
 For the proof of Lemma~\ref{prop:thick}, recall that the injective hull $E(X)$ of a compact metric space $X$ is a compact geodesic metric space. Furthermore, $X\subset E(X)$ isometrically and $\textnormal{diam}(E(X))=\textnormal{diam}(X)$,~see for example~\cite{Lan13}.
\begin{proof}[Proof of Lemma~\ref{prop:thick}]
We claim that for any $\delta>0$, there is an $(8\delta)$-thickening $Z$ of $X$ such that $Z$ is proper and $\delta$-geodesic. Lemma~\ref{prop:thick} then follows by first applying the claim to $X$, yielding a $(8C\varepsilon)$-thickening $Z$ of $X$ which is proper and $(C\varepsilon)$-geodesic, where $C$ is as in Lemma~\ref{prop:thick2}; and then applying Lemma~\ref{prop:thick2} to~$Z$ to obtain a $(C\varepsilon)$-thickening~$Y$ of~$Z$ which is proper and admits a $\left(C,\varepsilon \right)$-quadratic isoperimetric inequality. It remains to note that $Y$ is a $(9C \varepsilon)$-thickening of~$X$ and redefine $C$.\par 
In order to prove the claim, we perform a variation of the construction discussed in \cite{Wen08-sharp} and~\cite{LWYar}. Let $S$
be a maximal $\delta$-separated subset in $X$. For $z\in S$ set $B_z:=B(z,2\delta)$ and $X_z:=E(B_z)$. Then $\textnormal{diam}(B_z)\leq 4\delta$ and hence $\textnormal{diam}(X_z)\leq 4\delta$. We set
$$Z:= \Big(\bigsqcup_{z\in S} X_z\Big)_{\big/ \sim},$$
where $x\sim y$ if $x\in B_z\subset X_z$, $y\in B_w\subset X_w$ and $x=y$. The space $Z$ is endowed with the quotient metric. It follows from the construction that $Z$ is proper and a $(4\delta)$-thickening of $X$, compare also \cite{LWYar}.\par 
It remains to show that $Z$ is $\delta$-geodesic. To this end, let $x,y \in Z$ such that $d(x,y)<\delta$. Then either $x$ and $y$ lie in a common $X_z$ and $d(x,y)=d_{X_z}(x,y)$ or there are $z,w \in S$, $u\in X_z\cap X$ and $v\in X_w\cap X$ such that
$$d(x,y)=d_{X_z}(x,u)+d_X(u,v)+d_{X_w}(v,y).$$
In the former case, the distance is realized by a curve because~$X_z$ is geodesic. By the same reasoning, it suffices to show that $d(u,v)$ is realized by the length of a curve in $Z$ in the latter case. By maximality of $S$ there exists $s\in S$ such that $d_X(s,u)\leq \delta$ and hence $u,v\in X_s$. As $X_s\subset Z$ is a geodesic subset, the claim follows.
\end{proof}


\end{document}